\documentclass[12pt,oneside,english]{amsart}
\usepackage{lmodern}

\usepackage{courier}
\usepackage[T1]{fontenc}
\usepackage[latin9]{inputenc}
\usepackage{listings}
\usepackage[letterpaper]{geometry}
\geometry{verbose,tmargin=1.2in,bmargin=1.2in,lmargin=1.15in,rmargin=1.15in}
\usepackage{prettyref}
\usepackage{float}
\usepackage{amsthm}
\usepackage{amstext}
\usepackage{amssymb}
\usepackage{graphicx}

\makeatletter

\floatstyle{ruled}
\newfloat{algorithm}{tbp}{loa}
\providecommand{\algorithmname}{Algorithm}
\floatname{algorithm}{\protect\algorithmname}

\numberwithin{equation}{section}
\numberwithin{figure}{section}
\theoremstyle{plain}
\newtheorem{thm}{\protect\theoremname}
  \theoremstyle{plain}
  \newtheorem{lem}[thm]{\protect\lemmaname}
  \theoremstyle{plain}
  \newtheorem{cor}[thm]{\protect\corollaryname}

\usepackage{algpseudocode}

\@ifundefined{showcaptionsetup}{}{%
 \PassOptionsToPackage{caption=false}{subfig}}
\usepackage{subfig}
\makeatother

\usepackage{babel}
  \providecommand{\corollaryname}{Corollary}
  \providecommand{\lemmaname}{Lemma}
\providecommand{\theoremname}{Theorem}

\begin{document}

\title[Finite Difference Weights]{Finite Difference Weights, Spectral Differentiation, and Superconvergence}

\author{Burhan Sadiq and Divakar Viswanath}

\thanks{NSF grant DMS-0715510 and SCREMS-1026317.}

\email{bsadiq@umich.edu and divakar@umich.edu}
\begin{abstract}
Let $z_{1},z_{2},\ldots,z_{N}$ be a sequence of distinct grid points.
A finite difference formula approximates the $m$-th derivative $f^{(m)}(0)$
as $\sum w_{k}f\left(z_{k}\right)$, with $w_{k}$ being the weights.
We derive an algorithm for finding the weights $w_{k}$ which is an
improvement of an algorithm of Fornberg (\emph{Mathematics of Computation},
vol. 51 (1988), p. 699-706). This algorithm uses fewer arithmetic
operations than that of Fornberg by a factor of $4/(5m+5)$ while
being equally accurate. The algorithm that we derive computes finite
difference weights accurately even when $m$, the order of the derivative,
is as high as $16$. In addition, the algorithm generalizes easily
to the efficient computation of spectral differentiation matrices. 

The order of accuracy of the finite difference formula for $f^{(m)}(0)$
with grid points $hz_{k}$, $1\leq k\leq N$, is typically $\mathcal{O}\left(h^{N-m}\right)$.
However, the most commonly used finite difference formulas have an
order of accuracy that is higher than the typical. For instance, the
centered difference approximation $\left(f(h)-2f(0)+f(-h)\right)/h^{2}$
to $f''(0)$ has an order of accuracy equal to $2$ not $1$ . Even
unsymmetric finite difference formulas can exhibit such superconvergence
or boosted order of accuracy, as shown by the explicit algebraic condition
that we derive. If the grid points are real, we prove a basic result
stating that the order of accuracy can never be boosted by more than
$1$. 
\end{abstract}
\maketitle

\section{Introduction}

Since the beginning of the subject, finite difference methods have
been widely used for the numerical solution of partial differential
equations. Finite difference methods are easier to implement than
finite element or spectral methods. For handling irregular domain
geometry, finite difference methods are better than spectral methods
but not as flexible as finite element discretizations. 

The basic problem in designing finite difference discretizations is
to approximate $f^{(m)}(0)$, the $m$-th derivative of the function
$f(z)$ at $z=0$, using function values at the grid points $hz_{1},hz_{2},\ldots,hz_{N}$.
The grid points can be taken as $z_{1},\ldots,z_{N}$ by setting the
mesh parameter $h=1$. We make the mesh parameter $h$ explicit where
necessary but suppress it otherwise. The finite difference formula
can be given as either 
\begin{equation}
f^{m}(0)\approx w_{1,m}f\left(z_{1}\right)+\cdots+w_{N,m}f\left(z_{N}\right)\label{eq:FD-sans-h}
\end{equation}
or
\begin{equation}
f^{(m)}\left(0\right)\approx\frac{w_{1,m}f\left(hz_{1}\right)+\cdots+w_{N,m}f\left(hz_{N}\right)}{h^{m}}.\label{eq:FD-with-h}
\end{equation}
If we require \prettyref{eq:FD-with-h} to have an error that is $\mathcal{O}\left(h^{N-m}\right)$
for smooth $f$, the choice of the weights $w_{k,m}$, $1\leq k\leq N$,
is unique (see Section 8). The grid points are always assumed to be
distinct.

Some finite difference formulas such as the centered difference approximations
to $f'(0)$ and $f''(0)$---$\left(f(h)-f(-h)\right)/2h$ and $\left(f(h)-2f(0)+f(-h)\right)/h^{2}$,
respectively---occur very commonly and are part of the bread and butter
of scientific computation. The most common finite difference formulas
presuppose an evenly spaced grid. However, evenly spaced grids are
often inadequate. In applications, it is frequently necessary to make
the grid finer within boundary layers or internal layers where the
underlying phenomenon is characterized by rapid changes. In addition,
evenly spaced grids do not lend themselves to adaptive mesh refinement.
For these reasons, it is often necessary to use grids that are not
evenly spaced. 

Fornberg \cite{Fornberg1988,Fornberg1998,Fornberg1998Book} devised
an algorithm for determining the weights $w_{i,m}$ given the grid
points $z_{k}$. The results of this paper include two algorithms
(see Sections 3 and 4) that improve Fornberg's. The numerical stability
of algorithms to find finite difference weights can be subtle. Therefore
we will begin this introduction by considering numerical stability.

\emph{Numerical stability.} All algorithms to compute finite difference
weights come down to multiplying binomials of the form $(z-z_{k})$
and extracting coefficients from the product. The numerical stability
of multiplying binomials, or equivalently of going from roots of a
polynomial to its coefficients, has aspects that are not obvious at
first sight. A dramatic example is the product $\left(z-\omega^{0}\right)\left(z-\omega^{1}\right)\ldots\left(z-\omega^{N-1}\right)$
where $\omega=\exp(2\pi i/N)$. Mathematically the answer is $z^{N}-1$.
Numerically the error is as high as $10^{15}$ for $N=128$ in double
precision arithmetic \cite{CalvettiReichel2003}. For numerical stability,
the binomials must be ordered using the bit reversed ordering or the
Leja ordering or some other scheme as shown by Calvetti and Reichel
\cite{CalvettiReichel2003}. The roots must be ordered in such a way
that the coefficients of intermediate products are not too large. 

Another point related to numerical stability comes up frequently.
Suppose we want to multiply $(z-\alpha)$ into the polynomial $\left(a_{0}+\cdots+a_{M}z^{M}+\cdots\right)$.
The obvious way to form the coefficients of the product, which is
denoted $b_{0}+\cdots+b_{M}z^{M}+\cdots$, is to use
\begin{eqnarray}
b_{0} & = & -\alpha a_{0}\nonumber \\
b_{j} & = & -\alpha a_{j}+a_{j-1}\quad\text{for }1\leq j\leq M.\label{eq:binom-poly-mult}
\end{eqnarray}

A related problem is to assume $a_{0}+a_{1}z+\cdots=(z-\alpha)^{-1}\left(b_{0}+b_{1}z+\cdots\right)$
and find the $a_{j}$ in terms of the $b_{j}$. The equations that
comprise \prettyref{eq:binom-poly-mult} can be easily inverted assuming
$\alpha\neq0$:
\begin{eqnarray}
a_{0} & = & -b_{0}/\alpha\nonumber \\
a_{j} & = & (a_{j-1}-b_{j})/\alpha\quad\text{for }j=1,2,\ldots\label{eq:inverse-poly-mult}
\end{eqnarray}
Our point is that \prettyref{eq:binom-poly-mult} appears to be safer
numerically than \prettyref{eq:inverse-poly-mult}. This point is
discussed further in Section 6, but we note here that the system of
equations \prettyref{eq:binom-poly-mult} does not involve back substitution
while the system of equations \prettyref{eq:inverse-poly-mult} involves
back substitution. We codify this observation as a rule of thumb.

\vskip .2cm \noindent \emph{Rule of thumb:} Algorithms that use triangular
systems of recurrences with back substitution, for example systems
such as \prettyref{eq:inverse-poly-mult}, tend to be numerically
unsafe.\vskip .2cm

This rule of thumb will guide our derivation of a numerically stable
algorithm for finding finite difference weights in Section 4. However,
there are exceptions to it as we will see. 

\emph{Computation of finite difference weights.}\textbf{ }There exists
a unique polynomial $\pi(z)$ of degree $N-1$ which satisfies the
interpolation conditions $\pi(z_{k})=f_{k}$ for $k=1\ldots N$ \cite{Davis1975}.
The Lagrange form of this interpolating polynomial is given by 
\begin{equation}
\pi(z)=\sum_{k=1}^{N}w_{k}\pi_{k}(z)f_{k}\:\:\text{where}\:\:\pi_{k}(z)=\prod_{j\neq k}(z-z_{j})\:\text{and}\: w_{k}=1/\pi_{k}(z_{k}).\label{eq:lagrange-interpolant}
\end{equation}
The finite difference weight $w_{k,m}$ is equal to the coefficient
of $z^{m}$ in $w_{k}\pi_{k}(z)$ times $m!$ (see Section 8). The
computation of the Lagrange weights $w_{k}$ takes $2N^{2}$ arithmetic
operations roughly half of which are multiplications and half are
additions or subtractions (all operation counts are given to leading
order only). 

In effect, Fornberg's algorithm \cite{Fornberg1988} is to multiply
the binomials $(z-z_{j})$ using recursion of the form \prettyref{eq:binom-poly-mult}
to determine the coefficient of $z^{m}$ in the Lagrange cardinal
function $w_{k}\pi_{k}(z_{k})$. The algorithm is not presented in
this way in \cite{Fornberg1988}. Instead it is organized to yield
the finite difference weights for partial lists of grid points $z_{1},\ldots,z_{k}$
with $k$ increasing from $1$ to $N$. Fornberg's algorithm requires
$5N^{2}/2+5MN^{2}/2$ arithmetic operations to determine the weights
$w_{k,m}$, $1\leq k\leq N$, if $m=M$. In the operation count, the
coefficient of $N^{2}$ is proportional to $M$ because each Lagrange
cardinal function $w_{k}\pi_{k}(z)$ is treated independently. Since
the order of the derivative goes up to $M$ only, coefficients beyond
the $z^{M}$ term are not needed and are not computed.

The algorithm for finding finite difference weights presented in Section
3 uses the modified Lagrange formula \cite{BerrutTrefethen2004}:
\begin{equation}
\pi(z)=(z-z_{1})\ldots(z-z_{N})\left(\frac{w_{1}}{z-z_{1}}f_{1}+\cdots+\frac{w_{N}}{z-z_{N}}f_{N}\right).\label{eq:mlagrange}
\end{equation}
Here the idea is to begin by determining the Lagrange weights $w_{k}$
and the coefficients (up to the $z^{M}$ term) of the polynomial $\pi^{\ast}(z)=\prod_{k=1}^{N}(z-z_{k})$
. The coefficients are determined using a recursion of the form \prettyref{eq:binom-poly-mult}
repeatedly. Since $w_{k}\pi_{k}(z)=w_{k}\pi^{\ast}(z)/(z-z_{k})$
, we may then use a recursion of the form \prettyref{eq:inverse-poly-mult}
to determine the finite difference weights. This algorithm uses $2N^{2}+6MN$
operations. The $2N^{2}$ term is the expense of computing the Lagrange
weights $w_{k}$. 

Since the method based on the modified Lagrange formula uses \prettyref{eq:inverse-poly-mult},
it is numerically unsafe according to our rule of thumb. Indeed, for
large $M$ it is not numerically stable. The computations of Section
7 suggest that the method based on the modified Lagrange formula is
a good choice for $M\leq4$ but not for larger $M$. 

In Section 4, we derive another algorithm, one based on the following
partial products:
\[
l_{k}(z)=\prod_{j=1}^{k}(z-z_{j})\:\:\text{and}\:\: r_{k}(z)=\prod_{j=k}^{N}(z-z_{j}).
\]
By convention, $l_{0}\equiv r_{N+1}\equiv1$. For $k=1,\ldots,N$,
coefficients of these partial products are computed up to the $z^{M}$
term using recursions of the form \prettyref{eq:binom-poly-mult}
repeatedly. Since $\pi_{k}=l_{k-1}r_{k+1}$, the finite difference
weights $w_{k,m}$ for $m=0,\ldots,M$, are obtained by convolving
the coefficients of $l_{k-1}$ and $r_{k+1}$ followed by a multiplication
by $m!$ and the Lagrange weight $w_{k}$. This algorithm uses $2N^{2}+6NM+NM^{2}$
arithmetic operation. 

Even though the method based on partial products has an additional
expense of $NM^{2}$ operations, we recommend it for all uses. Because
it completely avoids back substitution, it has good numerical stability.
If FFTs are used for convolution, the $NM^{2}$ term can be replaced
by $\mathcal{O}\left(NM\log M\right)$. Instances where $M$ is so
large that the use of FFTs is advantageous are unlikely to occur in
practice.

\emph{Spectral differentiation.}\textbf{ }Beginning with \prettyref{eq:FD-sans-h},
our discussion of finite difference weights has assumed $z=0$ to
be the point of differentiation. Given grid points $z_{1},\ldots,z_{N}$,
the spectral differentiation matrix of order $M$ is an $N\times N$
matrix. If the $(i,j)$-th entry is denoted $\omega_{i,j}$, then
$f^{(M)}(z_{i})\approx\sum_{j}\omega_{i,j}f(z_{j})$. Thus $\omega_{i,j}$
is the finite difference weight at $z_{j}$ if the point of differentiation
is $z=z_{i}$. The point of differentiation can be shifted to $0$
by replacing the grid points $z_{1},\ldots,z_{N}$ by $z_{1}-z_{i},\ldots,z_{N}-z_{i}$
.

Applying Fornberg's method row by row would cost $\mathcal{O}\left(N^{3}\right)$
arithmetic operations. Welfert \cite{Welfert1997} modified Fornberg's
recurrences and obtained a method that computes the spectral differentiation
matrix of order $M$ using only $\mathcal{O}\left(N^{2}M\right)$
operations. 

The way to modify the first of our two methods (Section 3) so as to
compute spectral differentiation matrices in $\mathcal{O}\left(N^{2}M\right)$
arithmetic operations is almost obvious. We simply have to note that
the Lagrange weights $w_{k}$ defined by \prettyref{eq:lagrange-interpolant}
do not change at all when the entire grid is shifted. Therefore Lagrange
weights are the same for every row of the spectral differentiation
matrix and need to be computed just once using $\mathcal{O}\left(N^{2}\right)$
operations. The rest of the computation is repeated for every row,
with an appropriately shifted grid, costing $\mathcal{O}\left(N^{2}M\right)$
operations to determine the entire spectral differentiation matrix.

If the method based on partial products (Section 4) is used to determine
the coefficients of the Lagrange cardinal functions $w_{k}\pi_{k}(z)$,
the cost of computing the spectral differentiation matrix is $\mathcal{O}\left(N^{2}M^{2}\right)$.
Although Welfert's method and the method of Section 3 compute spectral
differentiation matrices with a lower asymptotic cost, they are less
accurate than the method based on partial products. They should not
be used for $M>4$. Welfert \cite{Welfert1997} stated that the problem
of round-off errors becomes {}``very important'' for $M>6$ and
that his tables did not use a large enough $M$ to expose the problem.
In contrast, the method based on partial products computes every entry
of the $512\times512$ Chebyshev differentiation matrix for the $16$-th
derivative with $9$ or more digits of accuracy.

\emph{Superconvergence or boosted order of accuracy.}\textbf{ }If
the number of grid points is $N$ and the order of the derivative
is $m$, the finite difference weights are unique if the difference
formula is required to be $\mathcal{O}\left(h^{N-m}\right)$ (see
Section 8). For certain grids, these unique weights imply an error
of $\mathcal{O}\left(h^{N-m+1}\right)$, which is of higher order
than what is typical for $N$ grid points and the $m$-th derivative.
We term this as superconvergence or boosted order of accuracy. In
section 8, we give explicit conditions for boosted order of accuracy.
The finite difference approximation \prettyref{eq:FD-with-h} to $f^{(m)}(0)$
has an order of accuracy boosted by $1$ if and only if
\[
S_{N-m}=0
\]
 where $S_{k}$ is the elementary symmetric function
\[
\sum_{1\leq i_{1}<\cdots<i_{k}\leq N}z_{i_{1}\ldots}z_{i_{k}}.
\]
 If the grid points are real, we prove that the order of accuracy
cannot be boosted by more than $1$.

For the special case $m=2$, the finite difference approximation to
the second derivative at $z=0$ using three grid points has an order
of accuracy equal to $2$, which is a boost of $1$, if and only if
the grid points satisfy $z_{1}+z_{2}+z_{3}=0$. Evidently, this condition
is satisfied by the grid points $-1,0,1$ used by the centered difference
formula. An unsymmetric choice of grid points such as $-3,1,2$ also
boosts the order of accuracy by $1$. However, no choice of $z_{1}$,
$z_{2}$, and $z_{3}$ on the real line can boost the order of accuracy
by more than 1. 

With four grid points and $m=2$, the condition for a boost in the
order of accuracy is 
\[
z_{1}z_{2}+z_{1}z_{3}+z_{1}z_{4}+z_{2}z_{3}+z_{2}z_{4}+z_{3}z_{4}=0.
\]
No choice of $z_{1}$, $z_{2}$, $z_{3}$, and $z_{3}$ on the real
line can boost the order of accuracy of the finite difference approximation
to $f''(0)$ by more than 1. The maximum possible order of accuracy
is $3$. In this case, a symmetric choice of grid points such as $-2,-1,1,2$
does not boost the order of accuracy. Unsymmetric grid points that
boost the order of accuracy to $3$ can be found easily. For example,
the order of accuracy is $3$ for the grid points $-2/3,0,1,2.$ 

These results about superconvergence or boosted order of accuracy
of finite difference formulas are quite basic. It is natural to suspect
that they may have been discovered a long time ago. However, the results
are neither stated nor proved in any source that we know of.

If the grid points $z_{i}$ are allowed to be complex, the order of
accuracy can be boosted further but not by more than $m$. The order
of accuracy is boosted by $k$ with $1\leq k\leq m$ if and only if
\[
S_{N-m}=S_{N-m+1}=\cdots=S_{N-m+k-1}=0.
\]
An algorithm to detect the order of accuracy and compute the error
constant of the finite difference formula \prettyref{eq:FD-with-h}
is given in section 8.

\section{From roots to coefficients}

Given $\alpha_{1},\ldots,\alpha_{N}$, the problem is to determine
the coefficients of a polynomial of degree $N$ whose roots are $\alpha_{1},\ldots,\alpha_{N}$.
The polynomial is evidently given by $\prod_{k=1}^{N}(z-\alpha_{k})$.
If the product $\prod_{k=1}^{n}(z-\alpha_{k})$ is given by $c_{0}+c_{1}z+\cdots+z^{n}$,
then the coefficients $c_{0}',\, c_{1}',\ldots$ of the product $\prod_{k=1}^{n+1}(z-\alpha_{k})$
are formed using $c_{0}'=-c_{0}\alpha_{n+1}$ and $c_{m}'=-c_{m}\alpha_{n+1}+c_{m-1}$
for $m=\mbox{1},2,\ldots$ as in \prettyref{eq:binom-poly-mult}.

For accurate evaluation of the coefficients, Calvetti and Reichel
\cite{CalvettiReichel2003} have demonstrated the need to order the
roots $\alpha_{k}$ carefully. The roots $\alpha_{k}$ must be ordered
in such a way that the coefficients of the partial products $\prod_{k=1}^{n}(z-\alpha_{k})$,
$n=1,\ldots,N-1$, that occur in intermediate stages are not much
bigger than the coefficients of the complete product. If $\alpha_{k}=\omega^{k-1}$,
where $\omega=\exp(2\pi i/N)$, the natural ordering $\omega^{0},\omega^{1},\ldots,\omega^{N-1}$
is numerically unsound. If $N$ is large the first roots in this sequence
are close to $1$ leading to partial products which resemble $(z-1)^{n}$
and have coefficients that are of the order of the binomial coefficients.
In contrast, the complete product is simply $z^{N}-1$.

This matter of ordering the roots carefully is equivalent to choosing
a good order of grid points when determining finite difference weights.
Good ordering of grid points may improve accuracy but is not as important
as it is in the general problem of determining coefficients from roots.
When determining finite difference weights for a derivative of order
$M$, we need coefficients of terms $1,z,\ldots,z^{M}$ but no higher.
The most dramatic numerical stabilities in determining coefficients
occur near the middle of the polynomial, but $M$, which is the order
of differentiation, will not be large in the determination of finite
difference weights. 

Suppose that the $\alpha_{i}$ are all nonzero and that $\prod_{k=1}^{N}(z-\alpha_{k})=c_{0}+\cdots+c_{M}z^{M}+\mathcal{O}\left(z^{M+1}\right).$
Another algorithm to compute $c_{0},\ldots,c_{M}$ is obtained as
follows. Let 
\begin{eqnarray*}
\mathcal{P}_{r} & = & \sum_{k=1}^{N}\alpha_{k}^{-r}\\
\mathcal{E}_{r} & = & \sum_{1\leq i_{1}<\cdots<i_{r}\leq N}\left(\alpha_{i_{1}}\ldots\alpha_{i_{r}}\right)^{-1}.
\end{eqnarray*}
By the Newton identities
\begin{eqnarray*}
\mathcal{E}_{1} & = & \mathcal{P}_{1}\\
2\mathcal{E}_{2} & = & \mathcal{E}_{1}\mathcal{P}_{1}-\mathcal{P}_{2}\\
3\mathcal{E}_{3} & = & \mathcal{E}_{2}\mathcal{P}_{1}-\mathcal{E}_{1}\mathcal{P}_{2}+\mathcal{P}_{3}
\end{eqnarray*}
and so on. By convention, $\mathcal{E}_{0}=1$. The algorithm begins
by computing the power sums $\mathcal{P}_{1},\ldots,\mathcal{P}_{M}$
directly and uses the Newton identities to compute the elementary
symmetric functions $\mathcal{E}_{r}$, $0\leq r\leq M$. The coefficients
are obtained using 
\[
c_{r}=(-1)^{N+r-1}\mathcal{E}_{r}\prod_{k=1}^{N}\alpha_{k}.
\]
This algorithm does not really presuppose an ordering of the $\alpha_{k}$
and the computation of the power sums $\mathcal{P}_{r}$ is backward
stable, and especially so if compensated summation is used \cite{Higham2002}.
If this method is used to compute the product $\prod_{k=1}^{N}\left(z-\omega^{k-1}\right)$,
where $\omega$ is as before, it finds the coefficients of the product
with excellent accuracy. But in general this method is inferior to
the repeated use of \prettyref{eq:binom-poly-mult} after choosing
a good ordering of the roots $\alpha_{k}$. By way of a partial explanation,
we note that the Newton identities have a triangular structure with
back substitution, which is deemed to be possibly unsound by the rule
of thumb stated in the introduction.

\section{Finite difference weights using the modified lagrange formula}

Let the grid points be $z_{1},\ldots,z_{N}$ with $f_{1},\ldots,f_{N}$
being the function values at the grid points. Define 
\begin{equation}
\pi_{k}(z)=\prod_{j\neq k}^{N}(z-z_{j}).\label{eq:pik}
\end{equation}
Then the Lagrange interpolant shown in \prettyref{eq:lagrange-interpolant}
is $\pi(z)=\sum_{k=1}^{N}w_{k}\pi_{k}(z)f_{k}.$ The weights $w_{k}$
equal $1/\pi_{k}(z_{k})$. Our objective is to derive formulas for
$d^{m}\pi(z)/dz^{m}$ at $z=0$ for $m=1,\ldots,M$. The $m=0$ case
is regular Lagrange interpolation. The weights $w_{k}$ will be assumed
to be known. The formulas for $d^{m}\pi(z)/dz^{m}$ at $z=0$ will
be linear combinations of $f_{k}$ with weights. We assume $1\leq M\leq N-1$.

If the coefficient of $z^{m}$ in $\pi_{k}(z)$ is denoted by $c_{k,m}$,
we have
\[
\frac{d^{m}\pi(z)}{dz^{m}}\Biggl|_{z=0}=m!\,\sum_{k=1}^{N}c_{k,m}w_{k}f_{k}.
\]
The finite difference weights are then given by 
\begin{equation}
w_{k,m}=m!w_{k}c_{k,m}.\label{eq:wkm-ckm}
\end{equation}
Once the $c_{k,m}$ are known, the weights $w_{k,m}$ are computed
using \prettyref{eq:wkm-ckm} for $k=1,\ldots,N$ and $m=1,\ldots,M$. 

Let $\pi^{\ast}(z)$ denote the polynomial $\prod_{k=1}^{N}(z-z_{k})$.
Let

\[
\pi^{\ast}(z)=\sum_{k=0}^{N}C_{k}z^{k}.
\]
 Notice that $\pi^{\ast}(z)$ occurs as a factor in front of the modified
Lagrange formula \prettyref{eq:mlagrange}. Our method for calculating
$w_{k,m}$ begins by calculating $C_{0},C_{1},\ldots,C_{M+1}$. The
$c_{k.m}$ are determined using $C_{0},\ldots,C_{M+1}$ and then the
$w_{k,m}$ are determined using \prettyref{eq:wkm-ckm}. The Lagrange
weights figure in this last step. It is in this sense that the method
for determining finite difference weights described in this section
uses the modified Lagrange formula.

We start by setting $C_{0}=1$ and $C_{1}=\cdots=C_{M+1}=0$. For
each $j=1,2,\ldots,N$ , the following update is performed:
\begin{eqnarray*}
C_{0}' & = & -z_{j}C_{0}\\
C_{1}' & = & -z_{j}C_{1}+C_{0}\\
 & \cdots\\
C_{M}' & = & -z_{j}C_{M}+C_{M-1}\\
C_{M+1}' & = & -z_{j}C_{M+1}+C_{M}
\end{eqnarray*}
followed by $C_{0}=C_{0}',\ldots,C_{M+1}=C_{M+1}'$. 

To obtain the $c_{k,m}$, use $\left(z-z_{k}\right)\pi_{k}(z)=\pi^{\ast}(z)$
to get
\begin{eqnarray*}
-z_{k}c_{k,0} & = & C_{0}\\
-z_{k}c_{k,1}+c_{k,0} & = & C_{1}\\
 & \cdots\\
-z_{k}c_{k,M}+c_{k,M-1} & = & C_{M}\\
-z_{k}c_{k,M+1}+c_{k,M} & = & C_{M+1}.
\end{eqnarray*}
If $z_{k}\neq0$, we have
\begin{eqnarray}
c_{k,0} & = & -C_{0}/z_{k}\nonumber \\
c_{k,1} & = & \left(c_{k,0}-C_{1}\right)/z_{k}\nonumber \\
 & \cdots\nonumber \\
c_{k,M} & = & \left(c_{k,M-1}-C_{M}\right)/z_{k}.\label{eq:ckm-back-subs}
\end{eqnarray}
If $z_{k}=0$, we have $c_{k,m}=C_{m+1}$ for $m=0,1,\ldots,M$. We
can now use \prettyref{eq:wkm-ckm} to find the finite difference
weights. The complete algorithm is exhibited as Algorithm \ref{alg:fdweights-mlgrng}.

\begin{algorithm}
\begin{algorithmic}[1]
\Function{LagrangeWeights}{$z_1,\ldots,z_N$,$w_1,\ldots,w_N$}
\For{$i=1,2,\ldots,N$}
\State $w_i=\prod_j(z_i-z_j)$ over $j=1,\ldots, N$ but $j\neq i$.
\State $w_i = 1/w_i$
\EndFor
\EndFunction
\Function{FindC}{$z_1,\ldots,z_N$, $C_0,\ldots,C_{M+1}$}
\State{Temporaries: $t_0,\ldots,t_{M+1}$}
\State{$C_0=1$ and $C_i=0$ for $1\leq i\leq M+1$}
\For{$j=1,\ldots,N$}
\State{$t_0 = -z_j C_0$}
\State{$t_i = C_{i-1}-z_j C_i$ for $i=1,2,\ldots,M+1$}
\State{$C_i=t_i$ for $i=0,1,\ldots,M+1$}
\EndFor
\EndFunction
\Function{findckm}{$z_k$, $C_0,C_1,\ldots,C_{M+1}$, $c_{k,0},\ldots,c_{k,M}$}
\If{$z_k==0$}
\State{$c_{k,m}=C_{m+1}$ for $m=0,\ldots,M$}
\Else
\State{$\zeta = 1/z_k$}
\State{$c_{k,0}=-\zeta C_0$}
\State{$c_{k,m}=\zeta(c_{k,m-1}-C_k)$ for $m=1,\ldots,M$}
\EndIf
\EndFunction
\Function{FindWeights}{$c_{k,0},\ldots,c_{k,M}$, $w_k$, $w_{k,0},\ldots,w_{k,M}$}
\State $f = w_k$
\For{$m=0,1,\ldots,M$}
\State $w_{k,m} = f c_{k,m}$
\State $f = (m+1)f$
\EndFor
\EndFunction
\Function{FindAllWeights}{$z_1,\ldots,z_N$, $w_{k,m}$ for $k=1,\ldots,N$ and $m=0,\ldots,M$}
\State Temporaries: $w_1,\ldots,w_N$
\State \Call{LagrangeWeights}{$z_1,\ldots,z_N$,$w_1,\ldots,w_N$}
\State Temporaries: $C_0,\ldots,C_{M+1}$
\State \Call{FindC}{$z_1,\ldots,z_N$, $C_0, \ldots, C_{M+1}$}
\State Temporaries: $c_{k,m}$ for $k=1,\ldots,N$ and $m=0,\ldots,M$
\For{$k=1,\ldots,N$}
\State\Call{findckm}{$z_k$, $C_0,C_1,\ldots,C_{M+1}$, $c_{k,0},\ldots,c_{k,M}$}
\State\Call{FindWeights}{$c_{k,0},\ldots,c_{k,M}$, $w_k$, $w_{k,0},\ldots,w_{k,M}$}
\EndFor
\EndFunction
\end{algorithmic}\caption{Finite difference weights using the modified Lagrange formula \label{alg:fdweights-mlgrng}}
\end{algorithm}
The operation count for Algorithm \ref{alg:fdweights-mlgrng} is as
follows. The operation counts are given to leading order only.
\begin{itemize}
\item The function L{\scriptsize AGRANGE}W{\scriptsize EIGHTS()} invoked
on line 34 computes the $w_{k}$ using $2N^{2}$ operations. More
precisely, the operations are $N(N-1)$ subtractions, $N(N-2)$ multiplications,
and $N$ divisions. The number of subtractions can be halved using
additional storage.
\item The function F{\scriptsize IND}C() invoked on line 36 computes $C_{0},\ldots,C_{M+1}$.
The number of operations used is $2MN$. More precisely, the operations
are $N(M+2)$ multiplications and $N(M+1)$ additions.
\item The function {\scriptsize FINDCKM}() invoked $N$ times on line 39
computes $c_{k,m}$. The number of operations used is $2MN$. More
precisely, the operations are $N(M+1)$ multiplications, $NM$ additions,
and one division. 
\item The function F{\scriptsize IND}W{\scriptsize EIGHTS}() invoked $N$
times on line 40 computes the finite difference weights $w_{k,m}$.
The number of operations used is $2MN$. This function implements
$w_{k,m}=m!w_{k}c_{k,m}$, which is \prettyref{eq:wkm-ckm}, using
two multiplications for each $w_{k,m}$. Even if the factorials $m!$
are precomputed and stored, we need the same number of multiplications
for each $w_{k,m}$
\end{itemize}
The total number of floating point operations is $2N^{2}+6MN$. 

The operation count of Fornberg's method is $5N^{2}/2+5MN^{2}/2-5M^{3}/6$
assuming $M\ll N$. Our algorithm differs from that of Fornberg \cite{Fornberg1988}
in two major respects. Firstly, Fornberg does not compute the Lagrange
weights $w_{k}$ explicitly as we do. Secondly, we form the coefficients
of $\pi^{\ast}(z)$ and use that to recover the coefficients of $\pi^{k}(z)$
for $k=1,\ldots,N$. Fornberg's method is laid out quite differently
from ours, but in effect it treats each $\pi_{k}(z)$ separately. 

Because Fornberg's method builds up the finite difference weights
for the grid $z_{1},\ldots,z_{N}$ using the finite difference weights
of the partial grids $z_{1},\ldots,z_{k}$, with $k$ increasing from
$1$ to $N$, it is forced to use $\mathcal{O}\left(N^{2}\right)$
divisions. Algorithm \prettyref{alg:fdweights-mlgrng} uses only $2N$
divisions. These occur in the computation of the Lagrange weights
(line 4) and in determining $c_{k,m}$ (line 20). Similarly, Algorithm
\prettyref{alg:fdweights-partial-prod}, which is derived in the next
section, uses only $N$ divisions. On current processors, division
is more expensive than multiplications or additions. For example,
in the Intel Nehalem microarchitecture, the latency of division is
three to six times that of multiplication. While multiplication instructions
can be dispatched to ports in successive clock cycles, the dispatch
of division instructions must be separated by five or so clock cycles. 

Many, if not most, of the numerical analysis textbooks recommend the
Newton form for polynomial interpolation. The weights of the Newton
form can be computed with $\mathcal{O}\left(N^{2}\right)$ arithmetic
operations using the divided differences table and updated using $\mathcal{O}(N)$
operations if a new grid point $z_{N+1}$ is added. In addition, the
Newton form can be evaluated at a point $z$ using $\mathcal{O}(N)$
operations. It has been well known that the weights of the Lagrange
form can be computed with $\mathcal{O}\left(N^{2}\right)$ operations,
but there was much less clarity about the evaluation of the Lagrange
form and the cost of updating the weights when a new grid point $z_{N+1}$
is added. In an engaging paper, Berrut and Trefethen \cite{BerrutTrefethen2004}
pointed out that an examination of \prettyref{eq:mlagrange} and the
formula \prettyref{eq:lagrange-interpolant} for the Lagrange weights
$w_{k}$ clarifies the updating and evaluation of the Lagrange form
to be as efficient as in the Newton case. For the use of the Lagrange
form for finding roots of functions, see Corless and Watt \cite{CorlessWatt2004}.

\section{Finite difference weights using partial products}

As we will see in Section 7, Algorithm \ref{alg:fdweights-mlgrng}
is accurate enough if $M\leq4$, but it should not be used if the
order of the derivative is higher than $4$. The problem is the use
of \prettyref{eq:ckm-back-subs} by the function \texttt{\footnotesize FINDCKM()}
to determine $c_{k,m}$. This step involves back substitution. We
will now derive an algorithm that completely avoids back substitution.

Let $l_{k}(z)=\prod_{j=1}^{k}(z-z_{j})$ and $r_{k}(z)=\prod_{j=k}^{N}(z-z_{j})$.
Denote the coefficients of $1,z,\ldots,z^{M}$ in $l_{k}(z)$ and
$r_{k}(z)$ by $L_{k,0},\ldots,L_{k,M}$ and $R_{k,0},\ldots,R_{k,M}$,
respectively. The coefficients $L_{k,m}$ are computed in the order
$k=1,2,\ldots,N$. The coefficients $R_{k,m}$ are computed in the
reverse order, which is $k=N,N-1,\ldots,1$. It is evident that $\pi_{k}(z)$,
which is defined by \prettyref{eq:lagrange-interpolant} or \prettyref{eq:pik},
is equal to $l_{k-1}(z)r_{k+1}(z)$. Therefore the coefficient $c_{k,m}$
of $z^{m}$ in $\pi_{k}(z)$ can be obtained using 
\[
c_{k,m}=\sum_{s=0}^{m}L_{k-1,m-s}R_{k+1,s}.
\]
The finite difference weight $w_{k,m}$ is obtained as $m!w_{k}c_{k,m}$,
where $w_{k}$ is the Lagrange weight at $z_{k}$.

Because this method stores $L_{k,m}$, $R_{k,m}$, and other intermediate
quantities, it is more convenient to implement it as a class than
in purely functional form. The public members of the C++ class are
shown below:{\footnotesize }
\begin{lstlisting}[basicstyle={\footnotesize},language={C++}]
class FDWeights{
  FDWeights(const double *zz, int NN, int MM);
  ~FDWeights();
  void setz0(double z0); //derivative at z0
  void setzk(int k0); //derivative at k0-th grid point
  double operator()(int m,  int k){//weight for mth derv at kth grid point
    assert((0<=m)&&(m<=M)&&(0<=k)&&(k<N));
    return fdw[k*(M+1)+m];
  }
  double operator()(int k){//weight for **Mth** derv at kth grid point
    assert((0<=k)&&(k<N));
    return fdw[k*(M+1)+M];
  }
};
\end{lstlisting}
The \texttt{fdw} array stores the finite difference weights so that
\texttt{fdw{[}k{*}(M+1)+m{]}} is equal to $w_{k,m}$. The implementation
of the member functions is displayed as Algorithm \ref{alg:fdweights-partial-prod}
.

\begin{algorithm}
\begin{algorithmic}[1]
\Function{FDWeights::FDWeights}{$z_1,\ldots,z_N$,$M$}
\State{Variables internal to class}
\State{(1): Order of derivative $M$ (initialized from argument list)}
\State{(2): Grid points $z_1,\ldots,z_N$ (initialized from argument list)}
\State{(3): Lagrange weights $w_1,\ldots,w_N$}
\State{(4): Partial product coefficients: $L_{k,m}$ and $R_{k,m}$ for $0\leq k\leq N+1$, $0\leq m \leq M$}
\State{(5): Finite difference weights: $w_{k,m}$ for $1\leq k\leq N$ and $0\leq m \leq M$}
\State\Call{LagrangeWeights}{$z_1,\ldots,z_N$,$w_1,\ldots,w_N$}
\State\Call{setz0}{0} (derivatives at $z=0$ by default)
\EndFunction
\Function{FDWeights::-FDWeights}{}
\State Deallocate all internal variables
\EndFunction
\Function{multbinom}{$a_0,\ldots,a_M$,$b_0,\ldots,b_M$,$\zeta$}
\State $b_0=-\zeta\,a_0$
\State $b_m=-\zeta\,a_m+a_{m-1}$ for $k=1,\ldots,M$
\EndFunction
\Function{convolve}{$a_0,\ldots,a_M$,$b_0,\ldots,b_M$,$c_0,\ldots,c_M$}
\State $c_m=a_mb_0+a_{m-1}b_1+\cdots+a_0b_m$ for $m=0,\ldots,M$
\EndFunction
\Function{FDWeights::setz0}{$\zeta$}
\State Temporaries: $\zeta_1,\ldots,\zeta_N$
\State $\zeta_k=z_k-\zeta$ for $k=1,\ldots,N$
\State $L_{0,m}=1$ for $m=0$ and $L_{k,m}=0$ for $m=1,\ldots,M$
\For{$k=1,\ldots,N$}
\State\Call{multbinom}{$L_{k-1,0},\ldots,L_{k-1,M}$,$L_{k,0},\ldots,L_{k,M}$, $\zeta_k$}
\EndFor
\State $R_{N+1,m}=1$ for $m=0$ and $R_{N+1,m}=0$ for $m=1,\ldots,M$.
\For{$k=N,N-1,\ldots,1$}
\State\Call{multbinom}{$R_{k+1,0},\ldots,R_{k+1,M}$,$R_{k,0},\ldots,R_{k,M}$,$\zeta_k$}
\EndFor
\For{$k=1,\ldots,N$}
\State Temporaries: $c_{k,m}$
\State\Call{convolve}{$L_{k-1,0},\ldots,L_{k-1,M}$,$R_{k+1,0},\ldots,R_{k+1,M}$,$c_{k,0},\ldots,c_{k,M}$}
\State\Call{FindWeights}{$c_{k,0},\ldots,c_{k,M}$,$w_k$,$w_{k,0},\ldots,w_{k,m}$}
\EndFor
\EndFunction
\Function{setk}{k}
\State \Call{setz0}{$z_k$}
\EndFunction
\end{algorithmic}

\caption{Finite difference weights using partial products. \label{alg:fdweights-partial-prod}}

\end{algorithm}

Algorithm \ref{alg:fdweights-partial-prod} invokes functions defined
as a part of Algorithm \ref{alg:fdweights-mlgrng} on lines 8 and
35. The total expense is $2N^{2}+6NM+NM^{2}$ arithmetic operations.

\section{Spectral differentiation matrices}

In Algorithm \ref{alg:fdweights-partial-prod}, the Lagrange weights
$w_{k}$ are computed in the class constructor \texttt{\footnotesize FDWeights::FDWeights()}.
If we want the finite difference weights for derivatives evaluated
at $z=\zeta$, we need to invoke the member function \texttt{\footnotesize FDWeights::setz0()}
with the argument $\zeta$. If we want the finite difference weights
for the derivatives evaluated at the $k$-th grid point $z_{k}$,
we need to invoke the member function \texttt{\footnotesize FDWeights::setzk()}.
The Lagrange weights are not re-computed when either of these member
functions is used to set the point at which derivatives are taken.
This has implications for spectral differentiation.

Suppose the grid to be $z_{1},\ldots,z_{N}$ as usual. In the $N\times N$
spectral differentiation matrix of order $M$, the $(i,j)$-th entry
$\omega_{i,j}$ is equal to the finite difference weight at $z_{j}$
when the derivative of order $M$ is taken at $z=z_{i}$. The spectral
differentiation matrix is computed in the following steps. 

\begin{algorithmic}[1]
\State FDWeights fd($z_1,\ldots,z_N$,$M$)
\For{$i=1,\ldots,N$}
\State fd.setzk($i$)
\State $\omega_{i,j}$=fd($j$) for $j=1,\ldots,N$
\EndFor
\end{algorithmic}In the class definition of the previous section, the function call
operator has been overloaded so that \texttt{fd(j)} returns the finite
difference weight at $z_{j}$ for the $M$-th derivative.

Since the Lagrange weights are computed just once, the cost of computing
the spectral differentiation matrix of order $M$ is $2N^{2}+6N^{2}M+N^{2}M^{2}$
arithmetic operations.

\section{Discussion of numerical stability}

Suppose that the sequence $a_{0},a_{1},\ldots$ and the sequence $b_{0},b_{1},\ldots$
are related by 
\[
\left(a_{0}+a_{1}z+a_{2}z^{2}+\cdots\right)=(z-\alpha)^{-1}\left(b_{0}+b_{1}z+b_{2}z^{2}+\cdots\right).
\]
Then the relationship between the sequences can be expressed in matrix
notation as {\footnotesize 
\begin{equation}
\left(\begin{array}{cccc}
-\alpha\\
1 & -\alpha\\
 & \ddots & \ddots\\
 &  & 1 & -\alpha
\end{array}\right)\left(\begin{array}{c}
a_{0}\\
a_{1}\\
\vdots\\
a_{M}
\end{array}\right)=\left(\begin{array}{c}
b_{0}\\
b_{1}\\
\vdots\\
b_{M}
\end{array}\right)\:\text{or}\:\left(\begin{array}{cccc}
-\frac{1}{\alpha}\\
-\frac{1}{\alpha^{2}} & -\frac{1}{\alpha}\\
\ddots & \ddots & \ddots\\
-\frac{1}{\alpha^{M+1}} & \ddots & -\frac{1}{\alpha^{2}} & -\frac{1}{\alpha}
\end{array}\right)\left(\begin{array}{c}
b_{0}\\
b_{1}\\
\vdots\\
b_{M}
\end{array}\right)=\left(\begin{array}{c}
a_{0}\\
a_{1}\\
\vdots\\
a_{M}
\end{array}\right).\label{eq:triangular-system}
\end{equation}
}The matrices that occur here will be denoted by $T$ and $T^{-1}$,
respectively. To calculate the $a_{j}$ given the $b_{j}$, the method
used ordinarily is back substitution---$a_{0}=-b_{0}/\alpha$ and
$a_{j}=(a_{j-1}-b_{j})/\alpha$ for $j\geq1$---as in \prettyref{eq:inverse-poly-mult}.
However, in Section 1, we stated as a rule of thumb that an intermediate
step which uses back substitution of that type is likely to be numerically
unsafe. In particular, that rule of thumb leads us to expect that
Algorithm \ref{alg:fdweights-mlgrng}, which uses back substitution
(to compute $c_{k,m}$), is possibly inferior to Algorithm \ref{alg:fdweights-partial-prod},
which does not use back substitution. In the next section, we will
show that that surmise is indeed true. Here we will discuss the rule
of thumb.

For convenience, we denote the two vectors that occur in \prettyref{eq:triangular-system}
as $\mathbf{a}$ and $\mathbf{b}$. If $\mathbf{a}$ is calculated
using back substitution, the following norm-wise bound on errors applies
\cite[Chapter 7]{Higham2002}: 
\[
\frac{||\tilde{\mathbf{a}}-\mathbf{a}||}{||\mathbf{a}||}\leq\frac{2\epsilon\kappa(T)}{1-\epsilon\kappa(T)}.
\]
Here $\tilde{\mathbf{a}}$ is the computed vector, $\epsilon$ is
a small multiple of the machine epsilon, and $\kappa(T)=||T||\,||T^{-1}||$
is the condition number. The norm can be any matrix norm such as the
$2$-norm or the $\infty$-norm. 

By inspecting $T$ and $T^{-1}$ displayed in \prettyref{eq:triangular-system},
it is evident that $\kappa(T)$ increases exponentially with $M$
if $|\alpha|<1$. The norm-wise bound suggests that the norm-wise
error in $\tilde{\mathbf{a}}$ increases exponentially with $M$. 

If $|\alpha|\geq1$, the matrix $T$ is evidently well-conditioned.
So it may appear as if the problem can be cured by recasting it to
make $|\alpha|\geq1$. Such a recasting is easy to accomplish. If
we write $(z-\alpha)=d\left(z/d-\alpha/d\right)$ for some $d\leq|\alpha|$
and expand the series in powers of $(z/d)$, then $\alpha/d$ will
replace $\alpha$ in the triangular systems and $T$ will therefore
be well-conditioned. Such scaling does not improve accuracy, however.
It is true that the norm-wise errors in the computed vector will be
small, but the entries of the computed vector will be poorly scaled.
When the computed vector is multiplied by inverse powers of $d$ to
recover entries of $\mathbf{a}$, the relative errors in entries such
as $a_{M}$ can be quite large.

Since $T$ is triangular, a component-wise bound of the following
type applies \cite[Chapter 8]{Higham2002}:
\[
\frac{||\tilde{\mathbf{a}}-\mathbf{a}||_{\infty}}{||\mathbf{a}||_{\infty}}\leq\frac{\text{cond}(T,\mathbf{a})\gamma_{n}}{1-\text{cond}(T)\gamma_{n}},\:\text{cond}(T,x)=\frac{||\,|T^{-1}|\,|T|\,|x|\,||_{\infty}}{||x||_{\infty}},\:\text{cond}(T,x)=||\,|T|\,|T^{-1}|\,||_{\infty}.
\]
Here $\gamma_{n}$ is approximately $2n$ times the machine epsilon
and $|T|$ is $T$ with its entries replaced by their absolute values.
This component-wise bound does not help much. If $|\alpha|<1$, the
condition numbers that occur in this bound again increase exponentially
with $M$. On the other hand, if the power series are expanded in
powers of $z/d$ for some $d\leq|\alpha|$, the condition numbers
become mild but the bound on the $\infty$-norm relative error becomes
useless when the computed vector is scaled by inverse powers of $d$.

In further support of the rule of thumb, we mention that triangular
matrices are typically ill-conditioned and their condition number
increases exponentially with the dimension of the matrix \cite{ViswanathTrefethen1998}.
For example, if all entries of a triangular matrix are independent
normal variables with mean $0$, the condition number increases at
the rate $2^{M}$, $M$ being the dimension of the matrix. 

An example where the accuracy does not deteriorate rapidly with $M$
in spite of the use of back substitution in an intermediate step occurs
in barycentric Hermite interpolation \cite{SadiqViswanath2011}. 

Triangular systems with back substitution come up in a natural way
if we want to determine the coefficients $b_{j}$ such that 
\[
b_{0}+b_{1}z+b_{2}z^{2}+\cdots=\frac{1}{a_{0}+a_{1}z+a_{2}z^{2}+\cdots}.
\]
However, the discussion here suggests that such a method will be inaccurate.
The fast Fourier transform (FFT) and contour integrals may have a
role in the numerically accurate invertion of series---see \cite{Bornemann2011}
for related ideas.

\section{Numerical Examples}

For simple choices of grid points, such as $z_{k}=0,\pm1,\pm2,\pm3,\pm4$,
Algorithms \ref{alg:fdweights-mlgrng} and \ref{alg:fdweights-partial-prod},
which are respectively based on the modified Lagrange formula (Section
3) and partial products (Section 4), as well as Fornberg's method
find the finite difference weights with errors that are very close
to machine precision. To compare the different methods, we must turn
to more complicated examples.

The Chebyshev points are defined by 
\[
z_{k}=\cos\left((k-1)\pi/(N-1)\right)=\sin\left(\pi(N-2k+1)/(N-1)\right)
\]
 for $k=1,\ldots,N$. We will look at the relative errors in the spectral
differentiation matrix of order $M$ for $M=2,4,8,16$ and $N=32,64,128,256,512$.
When Algorithm \ref{alg:fdweights-partial-prod} is employed, the
spectral differentiation matrix is computed as described in Section
5. 

The Chebyshev points are distributed over the interval $[-1,1]$.
The logarithmic capacity of an interval is one quarter its length,
which in this case is $1/2$. Therefore the Lagrange weights $w_{k}$
will be approximately of the order $1/2^{N}$. To prevent the possibility
of underflow for large $N$, the Chebyshev points are scaled to $2z_{k}$
and the resulting finite difference weights for the $M$-th derivative
are multiplied by $2^{-M}$. 

For reasons described in Section 2, the Chebyshev points are reordered.
The reordering we use is bit reversal. With $N$ being a power of
$2$ in our examples, the binary representation of $k$ (here $k$
is assumed to run from $1$ to $N-1$) can be reversed to map it to
a new position. The permutation induced by bit reversal is its own
inverse, which simplifies implementation. The reordering of the grid
points has the additional effect of making underflows less likely
\cite{SadiqViswanath2011}. For a discussion of various orderings
of Chebyshev points, see \cite{CalvettiReichel2003}. The figures
and plots here are given with the usual ordering of Chebyshev points.

\begin{figure}

\subfloat[Algorithm \prettyref{alg:fdweights-mlgrng}]{

\centering{}\includegraphics[scale=0.4]{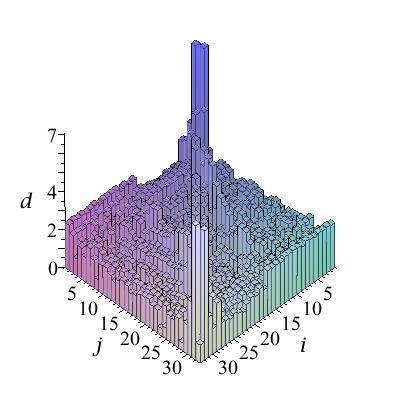}}\subfloat[Algorithm \prettyref{alg:fdweights-partial-prod}]{

\centering{}\includegraphics[scale=0.4]{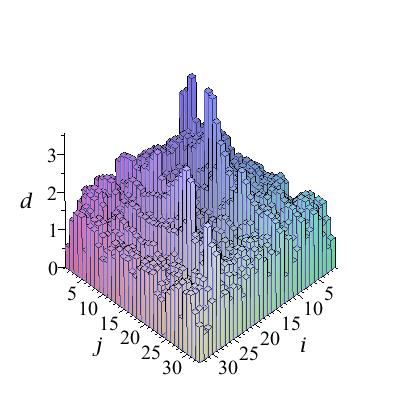}}\subfloat[Fornberg's algorithm]{\begin{centering}
\includegraphics[scale=0.4]{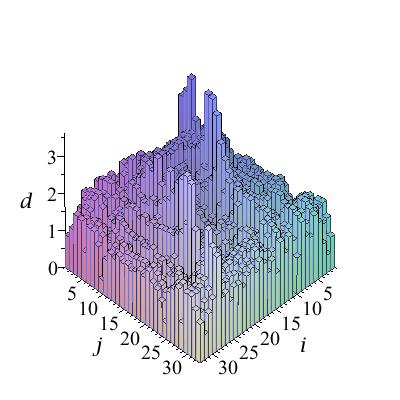}
\par\end{centering}

}\caption{Errors in the entries of the $32\times32$ Chebyshev differentiaton
matrix of order $M=8$. The vertical axis labeled $d$ shows the number
of digits of precision lost due to rounding errors. \label{fig:Errors-N32-M8}}

\end{figure}

Before turning to Figures \ref{fig:Errors-N32-M8} and \ref{fig:Variation-with-N},
which compare the numerical errors in different methods, we make an
important point. Even though the number of digits of precision lost
in the $32\times32$ differentiation matrix of order $M=8$ may be
just $3$, the errors in an $8$-th derivative evaluated using that
matrix will be much higher. Some entries of the $N\times N$ Chebyshev
differentiation matrix of order $M$ are $\mathcal{O}\left(N^{2M}\right)$.
Very large entries occur in the differentiation matrix and in exact
arithmetic an accurate derivative will be produced after delicate
cancelations during matrix-vector multiplication. In finite precision
arithmetic, the largeness of the entries implies that even rounding
errors in the entries that are of the order of machine epsilon are
sufficient to cause explosive errors in numerically computed derivatives.
The $512\times512$ Chebyshev differentiation matrix of order $16$
is useless even if every entry is computed with the maximum possible
$16$ digits in double precision arithmetic.

There are tricks for improving the accuracy of computed derivatives
or of entries of the Chebyshev differentiation matrix \cite{DonSolomonoff1995,WeidemanReddy2000}.
The so-called negative sum trick can be interpreted as a barycentric
formula for a derivative \cite{BaltenspergerTrummer2003,SchneiderWerner1986}.
This trick is limited to derivatives of the first order, and even
when $M=1$, it does not help when the differentiation matrix is inverted
in some form.

Our purpose here is to assess the accuracy with which the finite difference
weights are computed and we will stick to that purpose. From Figure
\ref{fig:Errors-N32-M8}, we see that Algorithm \ref{alg:fdweights-mlgrng},
which is based on the modified Lagrange formula, loses $7$ digits
for $N=32$ and $M=8$, while the other two methods lose only $3$
digits. There is a kind of flip symmetry in the errors shown in each
of the plots of that figure.

\begin{figure}

\subfloat[$M=2$]{\begin{centering}
\includegraphics[scale=0.3]{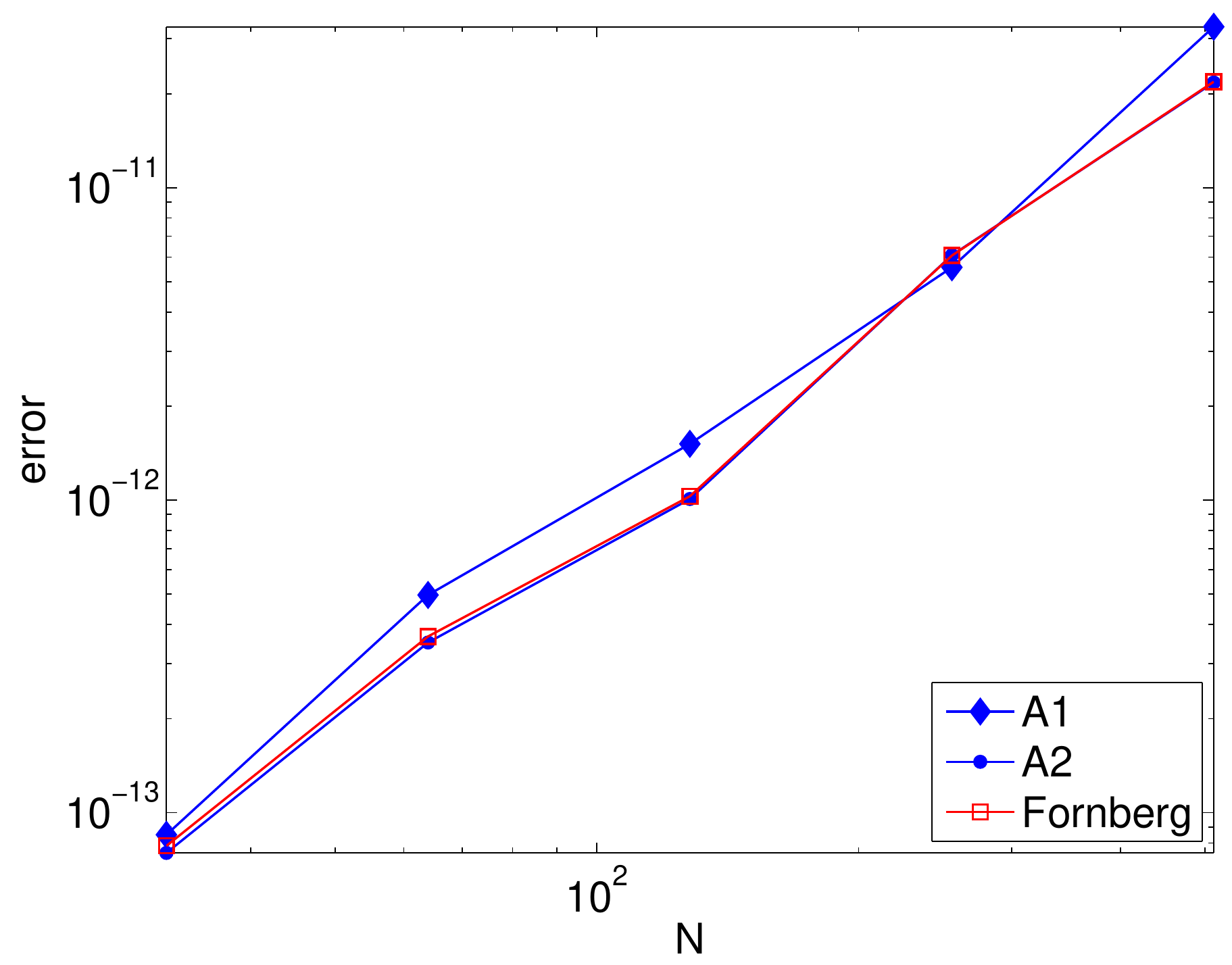}
\par\end{centering}

}\subfloat[$M=4$]{

\centering{}\includegraphics[scale=0.3]{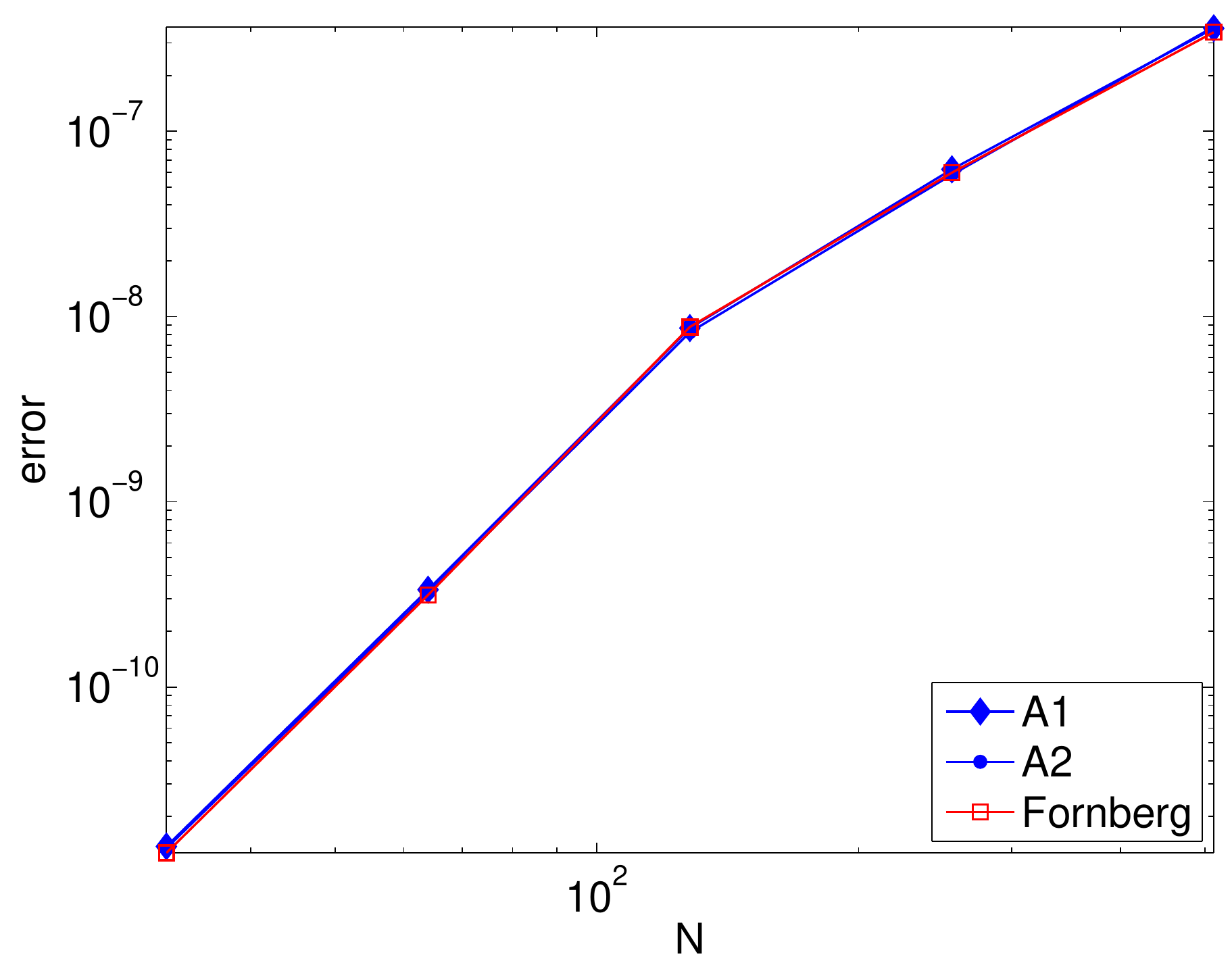}}

\subfloat[$M=8$]{\begin{centering}
\includegraphics[scale=0.3]{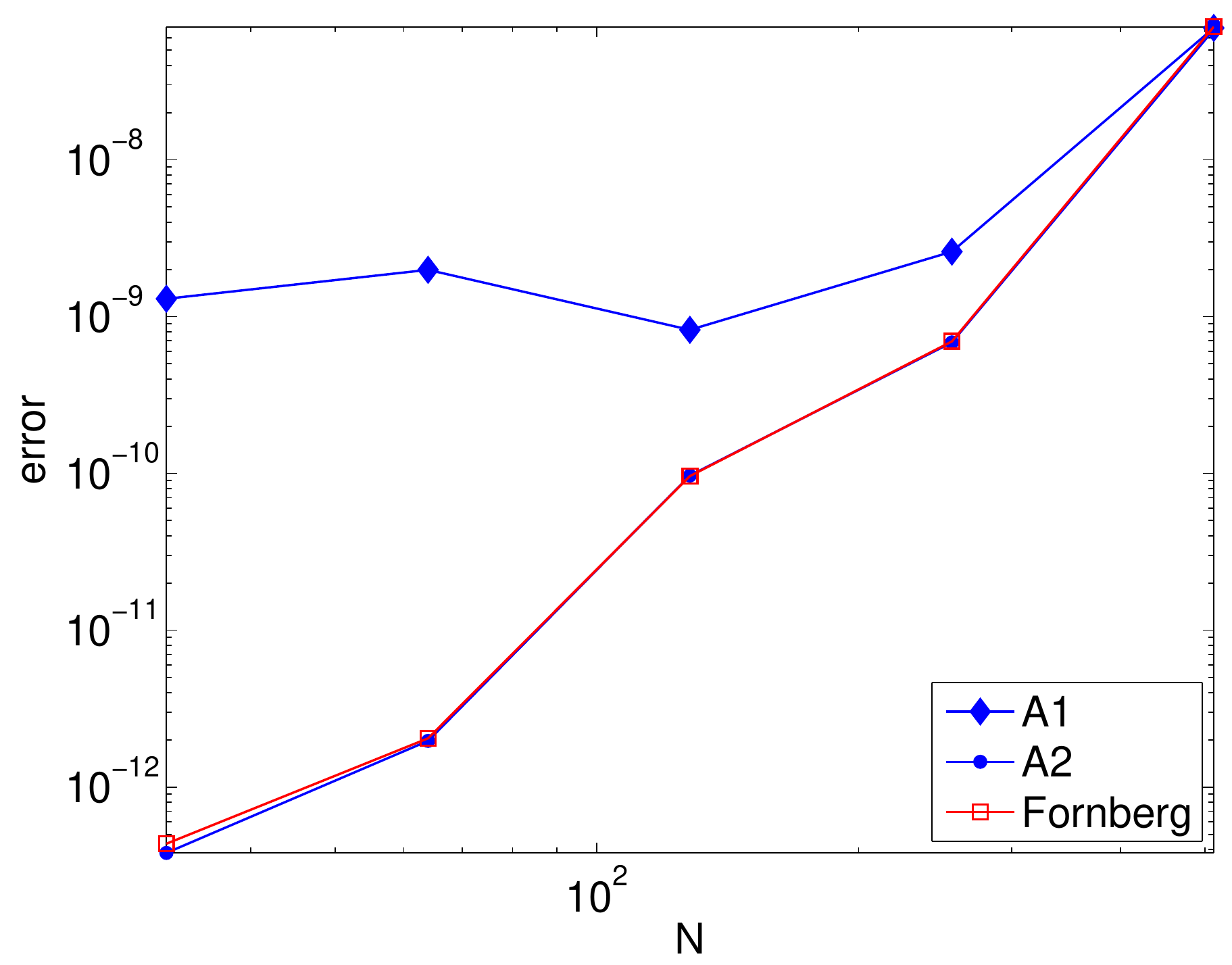}
\par\end{centering}

}\subfloat[$M=16$]{\begin{centering}
\includegraphics[scale=0.3]{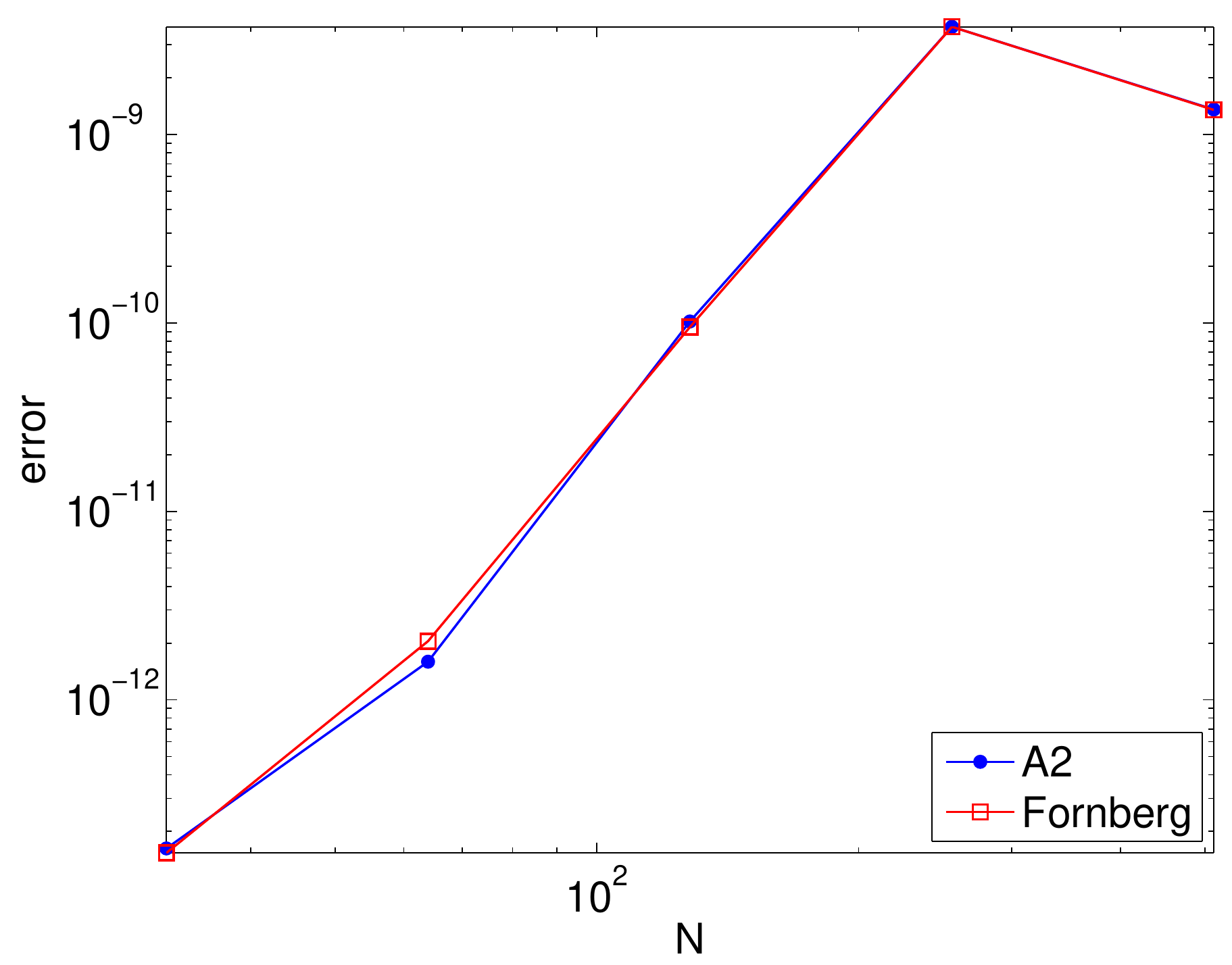}
\par\end{centering}

}\caption{Variation of the maximum relative error over the $N^{2}$ entries
of the $N\times N$ Chebyshev differentiation matrix. The order of
differentiation is $M$. In the legends, A1 and A2 stand for Algorithms
\ref{alg:fdweights-mlgrng} and \ref{alg:fdweights-partial-prod},
respectively.\label{fig:Variation-with-N}}

\end{figure}
 Figure \ref{fig:Variation-with-N} gives a more extensive report
of errors. All the errors were estimated using $50$ digit arithmetic
in MAPLE The errors were validated using $60$ digit arithmetic. From
the figure, we see that the algorithm based on partial products (A2
in the legends of the figure) is as accurate as Fornberg's method
in spite of using many fewer arithmetic operations. From the four
plots of Figure \ref{fig:Variation-with-N}, a surprise is that the
errors are smaller for $M=16$ than for $M=4$ or $M=8$. Why is that
the case? We are not certain of the answer. 

The errors in the algorithm based on the modified Lagrange formula
(A1 in the legends) is already noticeably larger for $M=8$. For $M=16$,
they are quite bad and of the order of $10^{5}$ (omitted from Figure
\ref{fig:Variation-with-N}D). Such a rapid deterioration in error
with increase in $M$ appears to validate the exponential instability
phenomenon hinted at in the previous section. 

The main finding of this section is that Algorithm \ref{alg:fdweights-partial-prod},
which is based on partial products, is as accurate as Fornberg's method
even though it uses fewer arithmetic operations.

\section{Superconvergence or boosted order of accuracy}

Let $z_{1},\ldots z_{N}$ be distinct grid points. Let 
\begin{equation}
f^{(m)}\left(0\right)\approx\frac{w_{1,m}f\left(hz_{1}\right)+\cdots+w_{N,m}f\left(hz_{N}\right)}{h^{m}}\label{eq:FD-with-h-repeat}
\end{equation}
be an approximation to the $m$-th derivative at $0$. We begin by
looking at the order of accuracy of this approximation. Here \prettyref{eq:FD-with-h}
is shown again as \prettyref{eq:FD-with-h-repeat} for convenience.
The order of the derivative $m$ is assumed to satisfy $m\leq N-1$.
The case $m=0$ corresponds to interpolation. The allowed values of
$m$ are from the set $\left\{ 1,2,\ldots,N-1\right\} $.
\begin{lem}
The finite difference formula \prettyref{eq:FD-with-h-repeat} has
an error of $\mathcal{O}\left(h^{N-m}\right)$ if and only if 
\[
\sum_{k=1}^{N}w_{k,m}x_{k}^{m}=m!\quad\text{and}\quad\sum_{k=1}^{N}w_{k,m}x_{k}^{n}=0
\]
for $n\in\left\{ 0,1,\ldots,N-1\right\} -\left\{ m\right\} $. The
function $f$ is assumed to be $N$ times continuously differentiable.\label{lem:The-finite-difference}\end{lem}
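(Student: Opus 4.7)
The natural approach is a direct Taylor expansion of $f$ about $0$. Since $f$ is $N$ times continuously differentiable, for each fixed grid point $z_k$ we may write
\[
f(hz_k) = \sum_{n=0}^{N-1}\frac{f^{(n)}(0)}{n!}(hz_k)^n + r_k(h),
\]
with remainder $r_k(h) = \mathcal{O}(h^N)$ as $h\to 0$ (the implied constant depending only on $\|f^{(N)}\|_\infty$ on a neighborhood of $0$ and on $|z_k|$). Substituting into the right-hand side of \prettyref{eq:FD-with-h-repeat} and interchanging the two finite sums yields
\[
\frac{1}{h^m}\sum_{k=1}^{N} w_{k,m}f(hz_k) = \sum_{n=0}^{N-1}\frac{f^{(n)}(0)}{n!}\,h^{n-m}\sum_{k=1}^{N}w_{k,m}z_k^{n} + \mathcal{O}(h^{N-m}).
\]
This identity is the workhorse of the proof; everything else is bookkeeping.

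For the ``if'' direction, assume the moment conditions. Then every term on the right-hand side with $n\neq m$ vanishes, and the $n=m$ term contributes $\frac{f^{(m)}(0)}{m!}\cdot h^{0}\cdot m! = f^{(m)}(0)$. What remains is exactly the remainder term $\mathcal{O}(h^{N-m})$, giving the claimed order of accuracy.

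For the ``only if'' direction, the idea is to apply the identity to the test monomials $f(z) = z^n$ for $n = 0, 1,\ldots,N-1$ and isolate one condition at a time. For such $f$ the Taylor expansion is exact (no remainder), and the identity reduces to
\[
\frac{1}{h^m}\sum_{k=1}^{N}w_{k,m}(hz_k)^n = h^{n-m}\sum_{k=1}^{N}w_{k,m}z_k^{n},
\]
while $f^{(m)}(0)$ equals $m!$ if $n=m$ and $0$ otherwise. The hypothesis that the discretization error is $\mathcal{O}(h^{N-m})$ then forces, letting $h\to 0$, each coefficient $\sum_k w_{k,m}z_k^{n}$ to take the asserted value. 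The subtle point here is that for $n<m$ the factor $h^{n-m}$ blows up, so the requirement $\sum_k w_{k,m}z_k^{n}=0$ is actually needed just to keep the expression bounded; whereas for $m<n\leq N-1$ we have $n-m<N-m$, and the condition $\sum_k w_{k,m}z_k^{n}=0$ is what prevents a spurious contribution of strictly lower order than $h^{N-m}$. The $n=m$ case fixes the normalization $\sum_k w_{k,m}z_k^{m}=m!$.

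No step is genuinely hard; the only place to be careful is the treatment of the low-order ($n<m$) conditions, where the asymptotic bound is being used to rule out divergent terms rather than merely higher-order corrections. Writing the identity above explicitly in powers of $h$ and reading off coefficients gives both directions in a clean way.
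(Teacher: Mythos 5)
Your proposal is correct and follows essentially the same route as the paper: Taylor expansion of $f$ about $0$ with an $\mathcal{O}(h^{N})$ remainder for sufficiency, and application of the formula to the monomials $1,z,\ldots,z^{N-1}$ for necessity. The paper's proof is terser (it writes the remainder as $z^{N}g(z)$ with $g$ continuous and leaves the necessity direction as a one-line remark), but the substance, including the observation that the $n<m$ conditions are needed to prevent divergence, is the same.
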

\begin{proof}
Assume that the weights $w_{k,m}$ satisfy the conditions given in
the lemma. The function $f(z)$ can be expanded using Taylor series
as $f(0)+f'(0)z+\cdots+f^{(N-1)}(0)z^{N-1}/(N-1)!+z^{N}g(z)$, where
$g(z)$is a continuous function. In particular, $g(z)$ is continuous
at $z=0$. If the Taylor expansion is substituted into the right hand
side of \prettyref{eq:FD-with-h-repeat} and the conditions satisfied
by the weights are used, we get the following expression:
\[
f^{(m)}(0)+h^{N-m}\left(w_{1,m}z_{1}^{N}g\left(hz_{1}\right)+\cdots+w_{N,m}z_{N}^{N}g\left(hz_{N}\right)\right).
\]
The coefficient of $h^{N-m}$ is bounded in the limit $h\rightarrow0$,
and therefore the error is $\mathcal{O}\left(h^{N-m}\right)$.

The necessity of the conditions on the weights $w_{k,m}$ is deduced
by applying the finite difference formula \prettyref{eq:FD-with-h-repeat}
to $f=1,z,\ldots,z^{N-1}$.
\end{proof}
The conditions on the weights in Lemma \ref{lem:The-finite-difference-2}
correspond to the following matrix system. 
\begin{equation}
\left(\begin{array}{cccc}
1 & 1 & \cdots & 1\\
z_{1} & z_{2} & \cdots & z_{N}\\
 &  & \cdots\\
z_{1}^{N-1} & z_{2}^{N-1} & \cdots & z_{N}^{N-1}
\end{array}\right)\left(\begin{array}{c}
w_{1,m}\\
w_{2,m}\\
\vdots\\
w_{N,m}
\end{array}\right)=m!e_{m}\label{eq:ooa-cond-weights}
\end{equation}
\label{pro:The-conditions-on}where $e_{m}$ is the unit vector with
its $m$-th entry equal to $1$. The matrix here is the transpose
of the well-known Gram or Vandermonde matrix. 

Newton and Lagrange interpolation are techniques for solving Vandermonde
systems. Newton interpolation is equivalent to an LU decomposition
of the Gram or Vandermonde matrix \cite{Davis1975}. Partly because
the matrix in \prettyref{eq:ooa-cond-weights} is the transpose of
the Gram or Vandermonde matrix, the interpolation techniques are not
directly applicable. 

The Gram or Vandermonde determinant equals $\prod_{1\leq i<j\leq N}\left(z_{j}-z_{i}\right)$
and is therefore nonsingular \cite{Davis1975}. Thus we have the following
theorem.
\begin{thm}
There exists a unique choice of weights $w_{k,m}$, $k=1,\ldots,N$,
such that the finite difference formula \prettyref{eq:FD-with-h-repeat}
has error $\mathcal{O}\left(h^{N-m}\right)$.
\end{thm}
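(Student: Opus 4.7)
The plan is to invoke Lemma \ref{lem:The-finite-difference} to reduce the theorem to a purely linear-algebraic statement about the Vandermonde-type system \prettyref{eq:ooa-cond-weights}, and then to conclude by the nonsingularity of the Vandermonde matrix.

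First I would observe that, by Lemma \ref{lem:The-finite-difference}, the finite difference formula \prettyref{eq:FD-with-h-repeat} has error $\mathcal{O}(h^{N-m})$ precisely when the weights $w_{1,m},\ldots,w_{N,m}$ satisfy the $N$ linear conditions $\sum_k w_{k,m} z_k^n = m!\,\delta_{nm}$ for $n=0,1,\ldots,N-1$. These are exactly the $N$ rows of the matrix equation \prettyref{eq:ooa-cond-weights}, so existence and uniqueness of such weights is equivalent to invertibility of the coefficient matrix appearing there.

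Next I would use the fact that this coefficient matrix is the transpose of the standard Vandermonde matrix on the nodes $z_1,\ldots,z_N$. Its determinant equals $\prod_{1\leq i<j\leq N}(z_j - z_i)$, as noted in the paragraph preceding the theorem. Since the grid points $z_1,\ldots,z_N$ are assumed throughout to be pairwise distinct, every factor $z_j - z_i$ is nonzero, so the determinant is nonzero and the matrix is invertible. Consequently, the system \prettyref{eq:ooa-cond-weights} has exactly one solution $(w_{1,m},\ldots,w_{N,m})$ for the right-hand side $m!\, e_m$, which proves the theorem.

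There is no real obstacle: the substantive content has already been packaged into Lemma \ref{lem:The-finite-difference} and the classical Vandermonde determinant formula, and the theorem is essentially a one-line corollary combining the two. The only thing to be careful about is bookkeeping between the matrix and its transpose, but this does not affect invertibility, so the argument goes through directly.
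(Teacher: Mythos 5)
Your proposal is correct and follows essentially the same route as the paper: the paper likewise reduces the error condition to the transposed Vandermonde system \prettyref{eq:ooa-cond-weights} via Lemma \ref{lem:The-finite-difference} and concludes from the nonvanishing of the determinant $\prod_{1\leq i<j\leq N}(z_{j}-z_{i})$ for distinct grid points. No gaps.
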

This theorem is trivial and generally known. However, its clear formulation
is essential for developments that will follow. Our main interest
is in boosted order of accuracy. 
\begin{lem}
The finite difference formula \prettyref{eq:FD-with-h} has boosted
order of accuracy with an error of $\mathcal{O}\left(h^{N-m+b}\right)$,
where $b$ is a positive integer, if and only if the weights $w_{k,m}$
satisfy
\[
w_{1,m}z_{1}^{N-1+\beta}+\cdots+w_{N,m}z_{N}^{N-1+\beta}=0
\]
for $\beta=1,\ldots,b$ in addition to the conditions of Lemma \ref{lem:The-finite-difference}.\label{lem:The-finite-difference-2}\end{lem}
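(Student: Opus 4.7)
The plan is to generalize the argument of Lemma \ref{lem:The-finite-difference} by carrying the Taylor expansion of $f$ further, up to order $N-1+b$, and then reading off the new cancellation conditions that arise from the additional terms.

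For sufficiency, I would assume that the weights satisfy both the conditions of Lemma \ref{lem:The-finite-difference} and the new conditions $\sum_{k=1}^{N} w_{k,m} z_{k}^{N-1+\beta}=0$ for $\beta=1,\ldots,b$, and assume that $f$ is $N+b$ times continuously differentiable. Write
\[
f(z)=\sum_{n=0}^{N-1+b}\frac{f^{(n)}(0)}{n!}\,z^{n}+z^{N+b}g(z),
\]
where $g$ is continuous at $z=0$. Substituting into the right hand side of \prettyref{eq:FD-with-h-repeat}, the terms with $n\in\{0,\ldots,N-1\}$ contribute exactly $f^{(m)}(0)$ by Lemma \ref{lem:The-finite-difference}. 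The terms with $n=N-1+\beta$, for $\beta=1,\ldots,b$, contribute
\[
\frac{f^{(N-1+\beta)}(0)}{(N-1+\beta)!}\,h^{N-1+\beta-m}\sum_{k=1}^{N}w_{k,m}z_{k}^{N-1+\beta},
\]
each of which vanishes by the hypothesized conditions. Finally, the remainder term contributes $h^{N-m+b}\sum_{k=1}^{N}w_{k,m}z_{k}^{N+b}g(hz_{k})$, whose coefficient is bounded as $h\to 0$ by the continuity of $g$. Hence the total error is $\mathcal{O}(h^{N-m+b})$.

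For necessity, I would use the probing-by-monomials idea already employed in Lemma \ref{lem:The-finite-difference}. Apply the finite difference formula to $f(z)=z^{N-1+\beta}$ for a fixed $\beta\in\{1,\ldots,b\}$. Since $N-1+\beta\geq N>m$, we have $f^{(m)}(0)=0$, while the right hand side of \prettyref{eq:FD-with-h-repeat} evaluates exactly (no truncation) to
\[
h^{N-1+\beta-m}\sum_{k=1}^{N}w_{k,m}z_{k}^{N-1+\beta}.
\]
If the formula has error $\mathcal{O}(h^{N-m+b})$, then this expression must itself be $\mathcal{O}(h^{N-m+b})$ as $h\to 0$. Because its exponent $N-m+\beta-1$ is strictly less than $N-m+b$ for each $\beta\in\{1,\ldots,b\}$, the scalar coefficient $\sum_{k=1}^{N}w_{k,m}z_{k}^{N-1+\beta}$ must vanish. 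Doing this for each $\beta$ in turn yields exactly the stated conditions. The conditions of Lemma \ref{lem:The-finite-difference} are recovered in the same way by probing with $f=1,z,\ldots,z^{N-1}$, since the error $\mathcal{O}(h^{N-m+b})$ is in particular $\mathcal{O}(h^{N-m})$.

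There is no real obstacle here; the argument is essentially bookkeeping. The one point to handle with a bit of care is the remainder term in the Taylor expansion: one must choose a form (such as $z^{N+b}g(z)$ with $g$ continuous at $0$) that is strong enough to guarantee the remainder sum is $\mathcal{O}(h^{N-m+b})$ uniformly as $h\to 0$, but no stronger than what the smoothness of $f$ supplies. Assuming $f\in C^{N+b}$ in a neighborhood of $0$ is the natural hypothesis and is consistent with the convention used in Lemma \ref{lem:The-finite-difference}.
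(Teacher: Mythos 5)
Your proof is correct and is precisely the argument the paper intends: its own proof of this lemma is just the one line ``Similar to the proof of Lemma \ref{lem:The-finite-difference},'' and you have carried out exactly that generalization (Taylor expansion to order $N-1+b$ for sufficiency, probing with the monomials $z^{N-1+\beta}$ for necessity), with the remainder term handled correctly.
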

\begin{proof}
Similar to the proof of Lemma \ref{lem:The-finite-difference}.
\end{proof}
To derive conditions for boosted order of accuracy that do not involve
the weights, we introduce the following notation. By
\begin{equation}
\det\left(z_{1},z_{2}\ldots z_{N};n_{1},n_{2},\ldots,n_{N}\right)\label{eq:det-zjni}
\end{equation}
we denote the determinant of the $N\times N$ matrix whose $(i,j)$-th
entry is $z_{j}^{n_{i}}$. The transpose of the Vandermonde or Gram
determinant of the grid points, which occurs in \prettyref{eq:ooa-cond-weights},
is $\det(z_{1},\ldots,z_{N};0,\ldots N-1)$ in this notation.
\begin{thm}
Let $w_{k,m}$, $k=1,\ldots,N$, be the unique solution of \prettyref{eq:ooa-cond-weights}
so that the finite difference formula \prettyref{eq:FD-with-h-repeat}
has an order of accuracy that is at least $N-m$. The order of accuracy
is boosted by $b$, where $b$ is a positive integer, if and only
if 
\[
\det\left(z_{1},\ldots,z_{N};[0,1,\ldots,N-1,N-1+\beta]-m\right)=0
\]
for $\beta=1,\ldots,b$. Here $[0,1,\ldots,N-1,N-1+\beta]-m$ denotes
the sequence $0,1,\ldots N-1,N-1+\beta$ with $m$ deleted.\label{thm:boost-ooa-det}\end{thm}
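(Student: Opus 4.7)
\medskip
\noindent\textbf{Proof proposal.} By Lemma~\ref{lem:The-finite-difference-2}, the order of accuracy is boosted by $b$ if and only if
\[
s_\beta \;:=\; \sum_{k=1}^{N} w_{k,m}\, z_k^{N-1+\beta} \;=\; 0 \qquad \text{for } \beta = 1,\ldots,b.
\]
The plan is therefore to express each $s_\beta$ as a ratio of two determinants, with denominator the Vandermonde determinant $\det(z_1,\ldots,z_N;0,1,\ldots,N-1)$ (which is nonzero by distinctness of the grid) and numerator equal, up to a sign and a factor of $m!$, to the determinant appearing in the statement.

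First, I would solve the Vandermonde system \prettyref{eq:ooa-cond-weights} by Cramer's rule, which gives $w_{k,m} = m!\,(-1)^{(m+1)+k}\, M_{m+1,k}/\det V$, where $V$ is the coefficient matrix in \prettyref{eq:ooa-cond-weights} (with rows indexed by powers $0,\ldots,N-1$), and $M_{m+1,k}$ is the minor obtained by deleting row $m+1$ and column $k$ of $V$. Substituting into $s_\beta$ yields
\[
s_\beta \;=\; \frac{m!}{\det V}\, \sum_{k=1}^{N} (-1)^{(m+1)+k}\, z_k^{N-1+\beta}\, M_{m+1,k}.
\]
Next I would recognize the sum on the right as a cofactor expansion along row $m+1$ of the $N\times N$ matrix $V''$ obtained from $V$ by replacing its $(m+1)$-th row (the row $z_k^m$) with the row $z_k^{N-1+\beta}$, since deleting row $m+1$ from $V''$ leaves the same matrix as deleting row $m+1$ from $V$. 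Thus $s_\beta = m!\,\det V''/\det V$.

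Finally, the rows of $V''$ correspond to the exponents $0,1,\ldots,m-1,\,N-1+\beta,\,m+1,\ldots,N-1$. Permuting the single row $N-1+\beta$ past the rows $m+1,\ldots,N-1$ to the bottom is a cyclic shift of $N-m-1$ adjacent transpositions, so $\det V'' = (-1)^{N-m-1}\det(z_1,\ldots,z_N;\,[0,1,\ldots,N-1,N-1+\beta]-m)$. Combining, we get
\[
s_\beta \;=\; \frac{(-1)^{N-m-1}\,m!\;\det(z_1,\ldots,z_N;\,[0,1,\ldots,N-1,N-1+\beta]-m)}{\det(z_1,\ldots,z_N;\,0,1,\ldots,N-1)},
\]
and since the denominator is nonzero, vanishing of $s_\beta$ is equivalent to vanishing of the numerator determinant, yielding the theorem.

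The argument is essentially bookkeeping, so I do not anticipate any genuine obstacle; the only place to be careful is the sign tracking in the cofactor expansion and the row-permutation, but both are routine once the matrix $V''$ is identified. Note that this representation also gives, as a byproduct, a closed form for the error constant when there is no boost, useful for the algorithm referred to at the end of Section~1.
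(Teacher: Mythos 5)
Your proposal is correct, but it takes a genuinely different route from the paper. The paper argues via orthogonality: by \prettyref{eq:ooa-cond-weights} the weight vector $W_{m}$ is orthogonal to every row of the Gram matrix except the one with exponent $m$; since those $N-1$ rows are linearly independent they span the orthogonal complement of $W_{m}$, so the moment vector $\left[z_{1}^{N-1+\beta},\ldots,z_{N}^{N-1+\beta}\right]$ is annihilated by $W_{m}$ exactly when it lies in that span, i.e.\ exactly when the stated $N\times N$ determinant vanishes. You instead compute the moment sum $s_{\beta}$ explicitly: Cramer's rule for the transposed Vandermonde system, followed by recognition of $\sum_{k}(-1)^{(m+1)+k}z_{k}^{N-1+\beta}M_{m+1,k}$ as the cofactor expansion of the row-replaced matrix $V''$, and a sign bookkeeping step. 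Your sign and indexing are right ($N-m-1$ adjacent transpositions move the replaced row to the bottom), and the conclusion follows since $\det V\neq0$. Each approach buys something: the paper's argument avoids all sign tracking, but its use of orthogonality forces the authors to append a remark about conjugating the weights when the grid points are complex, whereas your computation is purely algebraic and works over any field without modification. Moreover your identity
\[
s_{\beta}=\frac{(-1)^{N-m-1}\,m!\,\det\left(z_{1},\ldots,z_{N};[0,\ldots,N-1,N-1+\beta]-m\right)}{\det\left(z_{1},\ldots,z_{N};0,\ldots,N-1\right)}
\]
gives the actual value of the moment, not merely its vanishing; combined with \prettyref{eq:boost-eqn-1} it yields the error constant $C$ of Algorithm \ref{alg:Order-of-Accuracy} in closed form, which the paper only obtains by evaluating $\sum_{k}w_{k,m}z_{k}^{r+m}$ numerically.
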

\begin{proof}
First, assume the weights $w_{k,m}$ and the grid points $z_{k}$
to be real. The condition of Lemma \ref{lem:The-finite-difference-2}
requires that the row vector $W_{m}=\left[w_{1,m},\ldots,w_{N,m}\right]$
be orthogonal to 
\begin{equation}
\left[z_{1}^{N-1+\beta},\ldots,z_{N}^{N-1+\beta}\right].\label{eq:vector-beta}
\end{equation}
By \prettyref{eq:ooa-cond-weights}, $W_{m}$ is orthogonal to every
row of the Gram matrix except the $m$-th row. Since the Gram matrix
is non-singular, the rows of that matrix are a linearly independent
basis. Consequently, the $N-1$ dimensional space of vectors orthogonal
to $W_{m}$ is spanned by the rows of the Gram matrix with the $m$-th
row excepted. The vector \prettyref{eq:vector-beta} is orthogonal
to $W_{m}$ if and only if it lies in the span of the vectors 
\begin{equation}
\left[z_{1}^{n},\ldots,z_{N}^{n}\right]\quad n\in\left\{ 0,1\ldots N-1\right\} -\left\{ m\right\} .\label{eq:vectors-(N-1)}
\end{equation}
Thus the condition of Lemma \ref{lem:The-finite-difference-2} holds
if and only if the determinant of the $N\times N$ matrix whose first
$(N-1)$ rows are the vectors \prettyref{eq:vectors-(N-1)} and whose
last row is \prettyref{eq:vector-beta} vanishes as stated in the
theorem.

If the weights and the grid points are complex, the same argument
can be repeated after replacing the weights by their complex conjugates
in the definition of $W_{m}$.
\end{proof}
Theorem \ref{thm:boost-ooa-det} gives determinantal conditions for
boosted order of accuracy. We will cast those conditions into a more
tractable algebraic form. The following theorem gives the template
for the algebraic form into which the conditions of Theorem \ref{thm:boost-ooa-det}
will be cast.
\begin{thm}
If $n_{1},n_{2},\ldots,n_{N}$ are distinct positive integers, the
determinant \prettyref{eq:det-zjni} can be factorized as 
\[
\prod_{1\leq i<j\leq N}\left(z_{j}-z_{i}\right)\: S\left(z_{1},\ldots z_{N}\right),
\]
where $S(z_{1},\ldots z_{N})$ is a symmetric polynomial that is unchanged
when $z_{1},\ldots,z_{N}$ are permuted. All the coefficients of $S$
are integers.\label{thm:symm-poly-S}\end{thm}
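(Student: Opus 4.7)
The plan is to identify the Vandermonde product $V = \prod_{1\leq i<j\leq N}(z_j - z_i)$ as a factor of the determinant in the polynomial ring $\mathbb{Z}[z_1,\ldots,z_N]$, and then show that the quotient is symmetric with integer coefficients.

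First I would observe that $\det(z_1,\ldots,z_N;n_1,\ldots,n_N)$, viewed as a polynomial in $z_1,\ldots,z_N$, vanishes whenever $z_i = z_j$ for some $i \neq j$, because setting them equal makes two columns of the underlying matrix coincide. Hence each linear factor $(z_j - z_i)$ for $i < j$ divides the determinant in $\mathbb{Z}[z_1,\ldots,z_N]$. Since the $(z_j - z_i)$ are pairwise non-associate irreducible elements of this UFD, their product $V$ also divides the determinant. Write the quotient as $S(z_1,\ldots,z_N)$, a polynomial.

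Next I would establish symmetry of $S$. Swapping two variables $z_a \leftrightarrow z_b$ swaps the corresponding two columns of the matrix and so multiplies the determinant by $-1$; the same swap also multiplies $V$ by $-1$ (this is the classical alternating property of the Vandermonde). Consequently $S = \det(\cdots)/V$ is invariant under every transposition, hence under every permutation of the $z_i$'s, so $S$ is symmetric.

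The last step is the integrality of the coefficients of $S$, which is the only mildly subtle point. The determinant, expanded by the Leibniz formula, is a signed sum of monomials and therefore has integer coefficients. The Vandermonde $V$, also a signed sum of monomials with $\pm 1$ coefficients, is a primitive element of $\mathbb{Z}[z_1,\ldots,z_N]$ (its content is $1$). Gauss's lemma, applied in the polynomial ring $\mathbb{Z}[z_1,\ldots,z_N]$, then guarantees that since $V$ is primitive and divides the determinant in $\mathbb{Q}[z_1,\ldots,z_N]$, the quotient $S$ lies in $\mathbb{Z}[z_1,\ldots,z_N]$. The main obstacle is really just making this Gauss's-lemma step airtight; everything else is a one-line verification. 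An alternative, more constructive, route for the integrality is to do polynomial long division in one variable at a time, exploiting the fact that the leading coefficient of each factor $(z_j - z_i)$ is $1$, so each division stage preserves integer coefficients.
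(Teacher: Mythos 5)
Your proof is correct and follows essentially the same route as the paper: extract the Vandermonde factor from the vanishing of the determinant at $z_i=z_j$, invoke Gauss's lemma for integrality of the quotient, and use the sign flip under transpositions for symmetry. The only difference is organizational --- the paper peels off the factors $(z_N-z_1)\cdots(z_N-z_{N-1})$ one variable at a time, applying Gauss's lemma at each stage, whereas you argue globally in the UFD $\mathbb{Z}[z_1,\ldots,z_N]$ that the pairwise non-associate irreducibles $(z_j-z_i)$ each divide the determinant and hence so does their product.
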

\begin{proof}
We will work over $\mathbb{Q}$, the field of rational numbers. We
can think of the determinant \prettyref{eq:det-zjni} as a polynomial
in $z_{N}$ with coefficients in the field $\mathbb{Q}\left(z_{1},\ldots,z_{N-1}\right)$.
Since the determinant \prettyref{eq:det-zjni} vanishes, if $z_{N}$
is equal to any one of $z_{1},\ldots,z_{N-1}$, we have that the determinant
can be factorized as 
\[
\left(z_{N}-z_{1}\right)\left(z_{N}-z_{2}\right)\ldots\left(z_{N}-z_{N-1}\right)f
\]
where $f$ is an element of the field $\mathbb{Q}\left(z_{1},\ldots,z_{N-1}\right)$.
By Gauss's lemma, $f$ should in fact be an element of $\mathbb{Z}\left[z_{1},\ldots,z_{N-1}\right]$,
the ring of polynomials in $z_{1},\ldots,z_{N-1}$ with integer coefficients
(for Gauss's lemma, see Section 2.16 of \cite{JacobsonI} and in particular
the corollary at the end of that section). Now $f$ can be considered
as a polynomial in $z_{N-1}$ and factorized similarly, and so on,
until we get a factorization of the form shown in the theorem.

To prove that $S$ is symmetric, consider a transposition that switches
$z_{p}$ and $z_{q}$. The determinant \prettyref{eq:det-zjni} changes
sign by a familiar property of determinants. The product of all pairwise
differences $z_{j}-z_{i}$ also changes sign as may be easily verified
or as may be deduced by noting that the product is the Gram or Vandermonde
determinant. Therefore $S$ is unchanged by transpositions and is
a symmetric function. 
\end{proof}
For the determinants that arise as conditions for boosted order of
accuracy in Theorem \ref{thm:boost-ooa-det}, we describe a method
to compute the symmetric polynomial $S$ explicitly. The symmetric
polynomials that arise in Theorem \ref{thm:symm-poly-S} may well
have a connection to symmetric function theory. 

To begin with, let us consider the Gram determinant 
\begin{equation}
\det\left(z_{1},\ldots,z_{N},z_{N+1};0,\ldots,N-1,N\right).\label{eq:det-A1}
\end{equation}
This determinant is equal to 
\begin{equation}
\prod_{1\leq i<j\leq N}\left(z_{j}-z_{i}\right)\times\prod_{k=1}^{N}\left(z_{N+1}-z_{k}\right).\label{eq:exp-A1}
\end{equation}
See \cite[p. 25]{Davis1975}. By expanding \prettyref{eq:det-A1}
using the entries of the last column (each of these entries is a power
of $z_{N+1}$), we deduce that the coefficient of $z_{N+1}^{m}$ in
the expansion of \prettyref{eq:det-A1} is equal to 
\begin{equation}
(-1)^{N+m}\det(z_{1},\ldots,z_{N};[0,\ldots N-1,N]-m).\label{eq:boost-det-1}
\end{equation}
This determinant is the minor that corresponds to the entry $z_{N+1}^{m}$
in the expansion of \prettyref{eq:det-A1}. By inspecting \prettyref{eq:exp-A1},
we deduce that the coefficient of $z_{N+1}^{m}$ in that expression
is equal to 
\begin{equation}
\prod_{1\leq i<j\leq N}\left(z_{j}-z_{i}\right)\times(-1)^{N-m}S_{N-m},\label{eq:boost-cond-temp-1}
\end{equation}
where \texttt{
\[
S_{p}=\sum_{1\leq i_{1}<\cdots<i_{p}\leq N}z_{i_{1}}\ldots z_{i_{p}}.
\]
}Thus $S_{p}$ denotes the sum of all possible terms obtained by multiplying
$p$ of the grid points $z_{1},\ldots z_{N}$. For future use, we
introduce the notation $S_{p}^{+}$ for the sum of all possible terms
obtained by multiplying $p$ of the numbers $z_{1},\ldots,z_{N},z_{N+1}$.
\begin{thm}
The finite difference formula \prettyref{eq:FD-with-h-repeat} with
distinct grid points $z_{k}$ and weights $w_{k,m}$ that satisfy
\prettyref{eq:ooa-cond-weights} has an order of accuracy that is
boosted by $1$ if and only if $S_{N-m}=0$.\label{thm:boost-by-1}\end{thm}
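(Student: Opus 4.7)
The plan is to derive the theorem by combining two ingredients already set up in the text: the determinantal characterization of order boost from Theorem~\ref{thm:boost-ooa-det} (specialized to $b=1$), and the explicit evaluation of the Gram determinant \eqref{eq:det-A1} via its factorization \eqref{eq:exp-A1}. The argument is essentially a matching-of-coefficients computation, so there will be no real obstacle; the work lies in checking signs and the nonvanishing of the Vandermonde factor.

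First, I apply Theorem~\ref{thm:boost-ooa-det} with $b=1$. It tells us that the order of accuracy of \eqref{eq:FD-with-h-repeat} is boosted by at least $1$ if and only if
\[
\det\bigl(z_1,\ldots,z_N;\,[0,1,\ldots,N-1,N]-m\bigr)=0.
\]
Thus the proof reduces to showing that this determinant equals (a nonzero scalar times) $S_{N-m}$.

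Next, I consider the $(N+1)\times(N+1)$ Gram determinant in \eqref{eq:det-A1} as a polynomial in the auxiliary variable $z_{N+1}$ and extract the coefficient of $z_{N+1}^{m}$ in two different ways. On the one hand, expanding \eqref{eq:det-A1} by cofactors along the last column yields
\[
(-1)^{N+m}\,\det\bigl(z_1,\ldots,z_N;\,[0,\ldots,N-1,N]-m\bigr),
\]
which is \eqref{eq:boost-det-1}. On the other hand, the explicit factorization \eqref{eq:exp-A1} shows that the coefficient of $z_{N+1}^{m}$ in the same determinant equals
\[
\prod_{1\leq i<j\leq N}(z_j-z_i)\;\times\;(-1)^{N-m}\,S_{N-m},
\]
which is \eqref{eq:boost-cond-temp-1}; here $S_{N-m}$ is the elementary symmetric function in $z_1,\ldots,z_N$ of degree $N-m$, obtained by expanding $\prod_{k=1}^{N}(z_{N+1}-z_k)$ in powers of $z_{N+1}$.

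Equating the two expressions for the coefficient of $z_{N+1}^{m}$ and noting that $(-1)^{N+m}=(-1)^{N-m}$, I conclude that
\[
\det\bigl(z_1,\ldots,z_N;\,[0,\ldots,N-1,N]-m\bigr)\;=\;\prod_{1\leq i<j\leq N}(z_j-z_i)\;\cdot\;S_{N-m}.
\]
Because the grid points are distinct, the Vandermonde product is nonzero, so the determinant vanishes if and only if $S_{N-m}=0$. Combined with the reduction from the first paragraph, this gives the stated equivalence and completes the proof.
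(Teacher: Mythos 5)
Your proposal is correct and follows essentially the same route as the paper: it invokes Theorem \ref{thm:boost-ooa-det} with $\beta=1$ and establishes the identity \prettyref{eq:boost-eqn-1} by equating the two expressions \prettyref{eq:boost-det-1} and \prettyref{eq:boost-cond-temp-1} for the coefficient of $z_{N+1}^{m}$ in the Gram determinant \prettyref{eq:det-A1}, then uses distinctness of the grid points to cancel the nonzero Vandermonde factor. The sign check $(-1)^{N+m}=(-1)^{N-m}$ is right, and nothing is missing.
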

\begin{proof}
The condition for a boost of $1$ is obtained by setting $\beta=1$
in Theorem \ref{thm:boost-ooa-det}. By equating \prettyref{eq:boost-det-1}
with \prettyref{eq:boost-cond-temp-1}, we get 
\begin{equation}
\det(z_{1},\ldots,z_{N};[0,\ldots N-1,N]-m)=\prod_{1\leq i<j\leq N}\left(z_{j}-z_{i}\right)\times S_{N-m}\label{eq:boost-eqn-1}
\end{equation}
Since the grid points are distinct, the determinant is zero if and
only if $S_{N-m}=0$.
\end{proof}
The corollary that follows covers all the popular cases that have
boosted order of accuracy.
\begin{cor}
If the grid points $z_{1},\ldots,z_{N}$ are symmetric about $0$
(in other words $z$ is a grid point if and only if $-z$ is a grid
point) and $N-m$ is odd, the order of accuracy is boosted by $1$.
\end{cor}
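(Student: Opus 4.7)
The plan is to invoke Theorem \ref{thm:boost-by-1}, which reduces the corollary to showing that $S_{N-m}=0$ whenever the grid is symmetric about $0$ and $N-m$ is odd. Everything then follows from a parity argument on the polynomial $\pi^{\ast}(z)=\prod_{k=1}^{N}(z-z_{k})$.

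First I would exploit the symmetry. Since the multiset $\{z_{1},\ldots,z_{N}\}$ is invariant under negation, the identity
\[
\prod_{k=1}^{N}(z+z_{k})=\prod_{k=1}^{N}\bigl(z-(-z_{k})\bigr)=\prod_{k=1}^{N}(z-z_{k})=\pi^{\ast}(z)
\]
holds, and therefore $\pi^{\ast}(-z)=(-1)^{N}\prod_{k}(z+z_{k})=(-1)^{N}\pi^{\ast}(z)$. Writing $\pi^{\ast}(z)=\sum_{k=0}^{N}C_{k}z^{k}$, this functional equation forces $(-1)^{k}C_{k}=(-1)^{N}C_{k}$ for every $k$, so $C_{k}=0$ whenever $N-k$ is odd.

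Next I would translate this vanishing back into a statement about $S_{N-m}$. By Vieta's formulas, $C_{k}=(-1)^{N-k}S_{N-k}$, so $C_{k}=0$ is equivalent to $S_{N-k}=0$. Setting $k=m$ and using the hypothesis that $N-m$ is odd gives $S_{N-m}=0$, which by Theorem \ref{thm:boost-by-1} is exactly the condition for the order of accuracy to be boosted by $1$.

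There is essentially no obstacle here; the only point that requires care is keeping the sign $(-1)^{N}$ straight when comparing $\pi^{\ast}(-z)$ with $\pi^{\ast}(z)$, and the conversion between the coefficients $C_{k}$ of $\pi^{\ast}$ and the elementary symmetric functions $S_{N-k}$ of the grid points.
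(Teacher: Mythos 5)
Your proof is correct and is the evident intended argument: the paper states this corollary without proof immediately after Theorem \ref{thm:boost-by-1}, and the parity argument you give (symmetry of the grid forces $S_{N-m}=(-1)^{N-m}S_{N-m}$, hence $S_{N-m}=0$ when $N-m$ is odd) is exactly what is being left to the reader. The signs in your Vieta conversion $C_{k}=(-1)^{N-k}S_{N-k}$ and in $\pi^{\ast}(-z)=(-1)^{N}\pi^{\ast}(z)$ check out.
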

Although we have restricted $m$ to be in the set $\left\{ 1,2,\ldots,N-1\right\} $,
Theorems \ref{thm:boost-ooa-det} and \ref{thm:boost-by-1} hold for
the case $m=0$ as well. The case $m=0$ of \prettyref{eq:FD-with-h-repeat}
corresponds to interpolation. According to Theorem \ref{thm:boost-by-1},
the interpolation has boosted order of accuracy if and only if $S_{N}=0$
or one of the grid points is zero. Of course, the interpolant at zero
is exact if zero is one of the grid points. We do not consider the
case $m=0$ any further. 

To derive an algebraic condition for the order of accuracy to be boosted
by $2$, we apply the identity \prettyref{eq:boost-eqn-1} with grid
points $z_{1},\ldots,z_{N},z_{N+1}$ and rewrite it as follows.
\begin{multline*}
\det(z_{1},\ldots,z_{N},z_{N+1};[0,\ldots,N-1,N,N+1]-m)=\\
\prod_{1\leq i<j\leq N}\left(z_{j}-z_{i}\right)\times S_{N-m+1}^{+}\times\prod_{k=1}^{N}(z_{N+1}-z_{k}).
\end{multline*}
We equate the coefficients of $z_{N+1}^{N}$ to deduce that 
\begin{equation}
\det(z_{1},\ldots,z_{N};[0,\ldots N-1,N+1]-m)=\prod_{1\leq i<j\leq N}\left(z_{j}-z_{i}\right)\times\left(S_{1}S_{N-m}-S_{N-m+1}\right).\label{eq:boost-eqn-2}
\end{equation}
To obtain this identity, we assumed $m\geq1$ and used $S_{N-m+1}^{+}=S_{N-m+1}+z_{N+1}S_{N-m}$. 
\begin{lem}
The order of accuracy of the finite difference formula \prettyref{eq:FD-with-h-repeat}
is boosted by $2$ if and only if $S_{N-m}=0$ and $S_{N-m+1}=0$. \end{lem}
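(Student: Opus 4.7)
The plan is to apply Theorem \ref{thm:boost-ooa-det} with $b=2$ and then translate the two resulting determinantal conditions into the claimed algebraic conditions on $S_{N-m}$ and $S_{N-m+1}$ using the two identities \prettyref{eq:boost-eqn-1} and \prettyref{eq:boost-eqn-2} that have just been derived.

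First I would note that, by Theorem \ref{thm:boost-ooa-det}, the order of accuracy is boosted by $2$ if and only if the two determinants
\[
\det(z_1,\ldots,z_N;[0,\ldots,N-1,N]-m) \quad\text{and}\quad \det(z_1,\ldots,z_N;[0,\ldots,N-1,N+1]-m)
\]
both vanish. Because the grid points are assumed to be distinct, the Vandermonde factor $\prod_{1\le i<j\le N}(z_j-z_i)$ is nonzero, and so by \prettyref{eq:boost-eqn-1} the first determinant vanishes if and only if $S_{N-m}=0$, while by \prettyref{eq:boost-eqn-2} the second determinant vanishes if and only if $S_1 S_{N-m} - S_{N-m+1} = 0$.

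Now I would combine these two equivalences. Assuming both determinants vanish, the first equation gives $S_{N-m}=0$ and substituting this into the second gives $S_{N-m+1}=0$. Conversely, if $S_{N-m}=S_{N-m+1}=0$, then both combinations $S_{N-m}$ and $S_1 S_{N-m}-S_{N-m+1}$ vanish, so both determinants are zero and the boost is exactly $2$.

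There is no real obstacle here, since the work has already been done: equation \prettyref{eq:boost-eqn-2} was derived precisely so that the second determinantal condition would be expressed in terms of $S_{N-m}$ and $S_{N-m+1}$, and once that formula is in hand the lemma reduces to a short bookkeeping argument. The only mild subtlety is to make sure $m\ge 1$ so that the derivation of \prettyref{eq:boost-eqn-2} via the expansion $S_{N-m+1}^{+} = S_{N-m+1} + z_{N+1}S_{N-m}$ is valid; this is guaranteed by the standing assumption $m\in\{1,\ldots,N-1\}$.
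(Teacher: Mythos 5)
Your proposal is correct and follows essentially the same route as the paper: invoke Theorem \ref{thm:boost-ooa-det} with $b=2$, use \prettyref{eq:boost-eqn-1} and \prettyref{eq:boost-eqn-2} to convert the two determinantal conditions into $S_{N-m}=0$ and $S_{1}S_{N-m}-S_{N-m+1}=0$, and observe that together these are equivalent to $S_{N-m}=S_{N-m+1}=0$ since the Vandermonde factor is nonzero for distinct grid points. The only cosmetic quibble is the phrase ``the boost is exactly $2$'': as in the paper, the conditions characterize a boost of at least $2$, not exactly $2$.
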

\begin{proof}
We already have the condition $S_{N-m}=0$ for the order of accuracy
to be boosted by $1$. By Theorem \ref{thm:boost-ooa-det}, the order
of accuracy is boosted by $2$ if and only if the determinant of \prettyref{eq:boost-eqn-2}
is zero as well. Since $S_{N-m}=0$, that is equivalent $S_{N-m+1}=0$.\end{proof}
\begin{thm}
The order of accuracy of the finite difference formula \prettyref{eq:FD-with-h-repeat}
for the $m$-th derivative can never be boosted by more than $1$
as long as the grid points are real. Here $m\geq1$.\end{thm}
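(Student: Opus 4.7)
The plan is to translate the boost-by-$2$ condition of the preceding lemma into a statement about the polynomial $p(z)=\prod_{k=1}^{N}(z-z_k)$ and then use Rolle's theorem to derive a contradiction. Writing
\[
p(z)=z^{N}-S_{1}z^{N-1}+S_{2}z^{N-2}-\cdots+(-1)^{N}S_{N},
\]
the coefficient of $z^{j}$ in $p(z)$ is $(-1)^{N-j}S_{N-j}$, which is just $p^{(j)}(0)/j!$. Consequently, $S_{N-m}=0$ is equivalent to $p^{(m)}(0)=0$, and $S_{N-m+1}=0$ is equivalent to $p^{(m-1)}(0)=0$. By the previous lemma, a boost of $2$ in the order of accuracy would therefore force
\[
p^{(m-1)}(0)=0 \quad\text{and}\quad p^{(m)}(0)=0.
\]

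Next, I would invoke Rolle's theorem. Because $z_{1},\ldots,z_{N}$ are distinct real numbers, $p$ has $N$ simple real roots, and applying Rolle's theorem once between each consecutive pair of roots produces at least $N-1$ real roots of $p'$; since $\deg p'=N-1$, these are all the roots and they are distinct. Iterating this argument $(m-1)$ times (which is valid as long as $m-1\leq N-1$, i.e. $m\leq N$, which is automatic since $m\leq N-1$) shows that $p^{(m-1)}$ is a polynomial of degree $N-m+1\geq 2$ whose roots are all real and simple.

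Now the contradiction is immediate: the two conditions $p^{(m-1)}(0)=0$ and $(p^{(m-1)})'(0)=p^{(m)}(0)=0$ would make $z=0$ a root of $p^{(m-1)}$ of multiplicity at least $2$, contradicting the fact that every root of $p^{(m-1)}$ is simple. Hence $S_{N-m}$ and $S_{N-m+1}$ cannot simultaneously vanish when the grid points are real and distinct, and the order of accuracy cannot be boosted by more than $1$.

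I do not anticipate a genuine obstacle; the only subtlety is checking the edge cases $m=1$ and $m=N-1$. For $m=1$, $p^{(0)}=p$ has $N\geq 2$ distinct real roots, so a double root at $0$ is likewise impossible. For $m=N-1$, $p^{(m-1)}=p^{(N-2)}$ is a quadratic with two distinct real roots, so again $0$ cannot be a double root. Thus the Rolle-based argument applies uniformly throughout the admissible range $1\leq m\leq N-1$.
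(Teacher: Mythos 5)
Your proof is correct, and it takes a genuinely different route from the paper's. Both arguments start from the same place---the preceding lemma reduces the claim to showing that $S_{N-m}=0$ and $S_{N-m+1}=0$ cannot hold simultaneously for distinct real grid points---but they diverge from there. The paper handles $m=1$ separately (if $S_N=0$ one grid point is zero, and then $S_{N-1}$ is a product of nonzero distinct numbers), and for $m\geq2$ it splits off $z_N$, writes $S_r=s_r+z_Ns_{r-1}$ and $S_{r+1}=s_{r+1}+z_Ns_r$ in terms of the elementary symmetric functions $s_j$ of $z_1,\ldots,z_{N-1}$, eliminates $z_N$ to get $s_r^2=s_{r-1}s_{r+1}$, and contradicts the strict Newton inequality $s_r^2>s_{r-1}s_{r+1}$ for at least two distinct reals. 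You instead observe that $S_{N-j}=0$ is equivalent to $p^{(j)}(0)=0$ for the node polynomial $p(z)=\prod_k(z-z_k)$, so that a boost of $2$ would make $0$ a double root of $p^{(m-1)}$, while repeated application of Rolle's theorem shows that every derivative of a real polynomial with $N$ distinct real roots has only simple real roots. Your identification of the coefficient of $z^j$ as $(-1)^{N-j}S_{N-j}=p^{(j)}(0)/j!$ is correct, the degree count $N-m+1\geq2$ is valid throughout $1\leq m\leq N-1$, and your argument needs no case split on $m$. What each approach buys: yours is self-contained, uniform in $m$, and makes the role of realness geometrically transparent; the paper's stays within the symmetric-function framework already built up in Section 8 (at the cost of citing Hardy--Littlewood--P\'olya). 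The two are closer than they appear, since the standard proof of Newton's inequalities itself proceeds by differentiating a generating polynomial with real roots---your argument is essentially the unpacked form of the inequality the paper invokes.
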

\begin{proof}
By the preceding lemma, the grid points $z_{1},\ldots,z_{N}$ must
satisfy $S_{N-m}=0$ and $S_{N-m+1}=0$ for the order of accuracy
to be boosted by more than 1. First we consider $m=1$ and show that
$S_{N-1}=S_{N}=0$ is impossible. Since $N-1\geq m=1$, we must have
at least two grid points. Since $S_{N}=0$, at least one grid point
must be $0$. Since the grid points are distinct, no other grid point
is zero and $S_{N-1}\neq0$.

If $m\geq2$, let $r=N-m$. Then $r\geq1$. To show that $S_{r}=S_{r+1}=0$
is impossible, denote the elementary symmetric function formed by
adding all possible products of $r$ numbers out of $z_{1},\ldots,z_{N-1}$
by $s_{r}$. Then 
\begin{eqnarray*}
S_{r} & = & s_{r}+z_{N}s_{r-1}=0\\
S_{r+1} & = & s_{r+1}+z_{N}s_{r}=0.
\end{eqnarray*}
Here $s_{0}$ is taken to be $1$ as usual. Eliminating $z_{N}$,
we get $s_{r}^{2}=s_{r-1}s_{r+1}$.

Newton's inequality (see Theorem 144 on page 104 of \cite{HLP}) is
applied after noting that there are at least two numbers in the sequence
$z_{1},\ldots,z_{N-1}$ and that the numbers are all distinct. Newton's
inequality requires the numbers to be real. We get
\begin{eqnarray*}
\frac{s_{r}^{2}}{\binom{N-1}{r}^{2}} & > & \frac{s_{r-1}}{\binom{N-1}{r-1}}\frac{s_{r+1}}{\binom{N-1}{r+1}}\\
s_{r}^{2} & > & \left(\frac{N-r}{r}\frac{r+1}{N-r-1}\right)s_{r-1}s_{r+1}\\
s_{r}^{2} & > & s_{r-1}s_{r+1}
\end{eqnarray*}
It is impossible to have $s_{r}^{2}=s_{r-1}s_{r+1}$ or $S_{r}=S_{r+1}=0$
or $S_{N-m}=S_{N-m+1}=0$.
\end{proof}
If the grid points are complex, it may be possible to boost the order
of accuracy by more than 1. One may obtain formulas for the sequence
of determinants with $\beta=1,\ldots,b$ in Theorem \ref{thm:boost-ooa-det}.
We have already covered the case with $\beta=1$ in \prettyref{eq:boost-eqn-1}
and the case with $\beta=2$ in \prettyref{eq:boost-eqn-2}. To illustrate
the general procedure, we show how to get a formula for the determinant
of Theorem \ref{thm:boost-ooa-det} with $\beta=3$. We write down
the identity \prettyref{eq:boost-eqn-2} using the grid points $z_{1},\ldots,z_{N},z_{N+1}$
and replace $N$ by $N+1$. 

\begin{multline*}
\det(z_{1},\ldots,z_{N},z_{N+1};[0,\ldots N-1,N,N+2]-m)=\\
\prod_{1\leq i<j\leq N}\left(z_{j}-z_{i}\right)\times\left(S_{1}^{+}S_{N-m+1}^{+}-S_{N-m+2}^{+}\right)\times\prod_{k=1}^{N}(z_{N+1}-z_{k}).
\end{multline*}
We use $S_{1}^{+}=S_{1}+z_{N+1}$, $S_{N-m+1}^{+}=S_{N-m+1}+z_{N+1}S_{N-m}$,
and $S_{N-m+2}^{+}=S_{N-m+2}+z_{N+1}S_{N-m+1}$, and equate coefficients
of $z_{N+1}^{N}$ to get
\begin{multline*}
\det(z_{1},\ldots,z_{N};[0,\ldots N-1,N+2]-m)=\\
\prod_{1\leq i<j\leq N}\left(z_{j}-z_{i}\right)\times\left(S_{N-m+2}-S_{N-m+1}S_{1}+S_{N-m}S_{1}^{2}-S_{N-m}S_{2}\right).
\end{multline*}
This is the determinant with $\beta=3$ in Theorem \ref{thm:boost-ooa-det}.
It gets cumbersome to go on like this. However, we notice that the
condition for the determinants with $\beta=1,2,3$ to be zero is $S_{N-m}=S_{N-m+1}=S_{N-m+2}=0$.
Here a simple pattern is evident. 

To prove this pattern, we assume that the determinant of Theorem \ref{thm:boost-ooa-det}
with $\beta=r$ is of the form given by Theorem \ref{thm:symm-poly-S}
with 
\[
S=S_{N-m+r-1}+\text{more terms}
\]
where each term other than the first has a factor that is one of $S_{N-m},\ldots,S_{N-m+r-2}$.
We pass to the case $\beta=r+1$ using the grid points $z_{1},\ldots,z_{N},z_{N+1}$
as illustrated above. Then it is easy to see that the form of $S$
for $\beta=r+1$ is 
\[
S=S_{N-m+r}+\text{more terms}
\]
where each term other than the first has a factor that is one of $S_{N-m},\ldots,S_{N-m+r-1}$.
If the determinants with $\beta=1,\ldots,r$ in Theorem \ref{thm:boost-ooa-det}
are zero, the additional condition that must be satisfied by the grid
points for the determinant with $\beta=r+1$ to be zero is $S_{N-m+r}=0$.
\begin{thm}
The order of accuracy of the finite difference formula \prettyref{eq:FD-with-h-repeat}
for the $m$-th derivative is boosted by $b$ if and only if $S_{N-m}=S_{N-m+1}=\cdots=S_{N-m+b-1}=0$.
Even with complex grid points, the order of accuracy can never be
boosted by more than $m$.\end{thm}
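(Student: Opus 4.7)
My plan is to prove both halves by the inductive pattern already sketched at the end of the excerpt, and then dispatch the upper bound $b \leq m$ by a short argument with $S_N$ and $S_{N-1}$.

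For the iff, I would proceed by induction on $b$. The base case $b = 1$ is Theorem \ref{thm:boost-by-1}. For the inductive step, Theorem \ref{thm:boost-ooa-det} says the order of accuracy is boosted by $b$ iff the determinants
\[
D_\beta := \det\left(z_1,\ldots,z_N;\,[0,1,\ldots,N-1,N-1+\beta]-m\right)
\]
vanish for $\beta = 1,\ldots,b$. So I need a structural lemma: $D_\beta = \prod_{i<j}(z_j - z_i) \cdot S^{(\beta)}$, where
\[
S^{(\beta)} = S_{N-m+\beta-1} + T_\beta,
\]
and $T_\beta$ is a $\mathbb{Z}$-linear combination of monomials in the elementary symmetric functions each of which carries a factor from $\{S_{N-m},\,S_{N-m+1},\ldots,S_{N-m+\beta-2}\}$. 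Given this lemma, if the inductive hypothesis gives $S_{N-m} = \cdots = S_{N-m+b-2} = 0$ (forcing $T_b = 0$), then $D_b = 0$ iff $S_{N-m+b-1} = 0$, which closes the induction.

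To prove the structural lemma by induction on $\beta$, I would follow the illustrative passage leading to the $\beta = 3$ identity in the excerpt. The base cases $\beta = 1, 2$ are (\ref{eq:boost-eqn-1}) and (\ref{eq:boost-eqn-2}). For the step, I apply the $\beta = r$ identity to the augmented grid $z_1,\ldots,z_N,z_{N+1}$ (replacing $N$ by $N+1$) and then equate coefficients of $z_{N+1}^N$ on both sides. On the left this produces $D_{r+1}$ in the original $N$ variables. On the right I use the Newton-style expansions $S_{k}^+ = S_k + z_{N+1} S_{k-1}$ to extract the $z_{N+1}^N$ coefficient; a direct check shows the leading part is $S_{N-m+r}$ and every other term acquires a factor from $\{S_{N-m},\ldots,S_{N-m+r-1}\}$, which is exactly the form asserted for $T_{r+1}$. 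This bookkeeping is the one piece that requires genuine care, and I expect it to be the main obstacle: keeping track of which $S$'s appear as factors in the leftover terms after passing from $N+1$ back to $N$ variables. Integrality and the symmetric-polynomial form are guaranteed by Theorem \ref{thm:symm-poly-S}, so only the identification of the leading term is at stake.

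For the bound $b \leq m$, suppose for contradiction that the order is boosted by $m+1$. Then by the first part we would need
\[
S_{N-m} = S_{N-m+1} = \cdots = S_{N-1} = S_N = 0.
\]
Since $S_N = z_1 z_2 \cdots z_N = 0$, some grid point, say $z_j$, equals $0$. But the grid points are distinct, so $z_k \neq 0$ for $k \neq j$. In $S_{N-1}$, every term except $\prod_{k \neq j} z_k$ contains $z_j$ and therefore vanishes, leaving $S_{N-1} = \prod_{k \neq j} z_k \neq 0$, a contradiction. This completes the proof.
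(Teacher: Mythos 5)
Your proposal is correct and follows essentially the same route as the paper: the first part is exactly the paper's inductive argument (base cases $\beta=1,2$ from \prettyref{eq:boost-eqn-1} and \prettyref{eq:boost-eqn-2}, then passing to the augmented grid $z_1,\ldots,z_N,z_{N+1}$ and equating coefficients of $z_{N+1}^N$ to show the new symmetric factor is $S_{N-m+r}$ plus terms carrying a factor from $\{S_{N-m},\ldots,S_{N-m+r-1}\}$), and your proof of the bound $b\leq m$ via $S_N=0$ forcing a unique zero grid point and hence $S_{N-1}\neq 0$ is word for word the paper's argument. The bookkeeping step you flag as the main obstacle is also the step the paper dispatches with ``it is easy to see,'' so you have identified the right delicate point without deviating from the paper's approach.
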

\begin{proof}
The first part of the theorem was proved by the calculations that
preceded its statement. To prove the second part, suppose that the
order of accuracy is boosted by $m+1$. Then we must have $S_{N}=0$
which means at least one of the grid points is zero. Since no other
grid point can be zero, we must have $S_{N-1}\neq0$, which is a contradiction.
\end{proof}
\begin{algorithm}
\begin{algorithmic}
\State Input: Grid points $z_1,\ldots,z_N$ all of which are real.
\State Input: Order of derivative $m$ with $1\leq m\leq N-1$.
\State Input: Weights $w_{1,m},w_{2,m},\ldots,w_{N,m}$ in the finite difference 
formula for $f^{(m)}(0)$.
\State Comment: $w_{k,m}$ are computed using Algorithm 2. 
\State Input: Tolerance $\tau$
\State $S_{N-m} = \sum_{1\leq i_1<\cdots<i_{N-m}\leq N} z_{i_1}\ldots z_{i_{N-m}}$.
\State $T_{N-m} = \sum_{1\leq i_1<\cdots<i_{N-m}\leq N} \bigl|z_{i_1}\ldots z_{i_{N-m}}\bigr|$.
\If{$\bigl|S_{N-m}\bigr|<\tau T_{N-m}$}
\State $r = N-m+1$.
\Else
\State $r = N-m$.
\EndIf
\State $C=\sum_{k=1}^{k=N} w_{k,m} z_k^{r+m}$.
\State Leading error term of (1.2): $C\frac{f^{(r+m)}(0)}{(r+m)!}h^r$.
\end{algorithmic}

\centering{}\caption{Order of Accuracy and Error Constant\label{alg:Order-of-Accuracy}}
\end{algorithm}
Algorithm \ref{alg:Order-of-Accuracy} uses the results of this section
to determine the order of accuracy and the leading error term in the
case of real grid points. We suspect that the error is exactly equal
to $C\frac{f^{(r+m)}(\zeta)}{(r+m)!}h^{r}$ for some point $\zeta$
in an interval that includes $0$ and all the grid points.

\section{Conclusion}

Algorithm \ref{alg:fdweights-partial-prod} uses partial products
of binomials of the type $\prod_{j=1}^{j=k}(z-z_{j})$ to compute
finite different weights with good accuracy. A C++ implementation
of the method will be posted on the internet. This method lends itself
to the efficient computation of spectral differentiation matrices
as described in Section 5.

Many finite difference formulas, such as the centered difference formulas
for the first and second derivatives, have an order of accuracy which
is higher than the typical by $1$. In Section 8, we proved theorems
which characterize superconvergence or boosted order of accuracy of
finite difference formulas completely.

\section{Acknowledgements}

The authors thank John Boyd, Nick Trefethen and Oleg Zikanov for useful
discussions. 

\bibliographystyle{plain}
\bibliography{references}

\end{document}